\documentclass[a4paper,12pt]{article}
\usepackage[centertags]{amsmath}
\usepackage{amsfonts}
\usepackage{amssymb}
\usepackage{amsthm}
\usepackage{amsmath}
\usepackage{dsfont}
\usepackage{graphicx}
\usepackage{tikz, subfigure}
\usepackage{pstricks-add}
\usepackage{verbatim}

\addtolength{\topmargin}{-2cm} \addtolength{\oddsidemargin}{-1cm}
\addtolength{\textheight}{4cm} \addtolength{\textwidth}{2cm}
\setlength{\parskip}{0.3cm} \setlength{\parindent}{0cm}
\usepackage[latin1]{inputenc}
\usepackage{tikz}
\definecolor {processblue}{cmyk}{0.96,0,0,0}
\usepackage{amsfonts,graphicx,amsmath,amssymb,hyperref,color}
\usepackage[english]{babel}
\usepackage{authblk}

\newtheorem{Theorem}{Theorem}[section]

\newtheorem{Lemma}{Lemma}[section]
\newtheorem{Corollary}{Corollary}[section]

\newtheorem{Example}{Example}[section]

\newtheorem{Remark}{Remark}[section]

\AtEndDocument{\bigskip{\footnotesize%
  \textsc{Department of Mathematics, Ningbo University, Ningbo, Zhejiang, People's Republic of China} \par
  \textit{E-mail address:} \texttt{1911071014@nbu.edu.cn} \par
}}
\AtEndDocument{\bigskip{\footnotesize%
  \textsc{Department of Mathematics, Ningbo University, Ningbo, Zhejiang, People's Republic of China} \par
  \textit{E-mail address:} \texttt{1911071003@nbu.edu.cn} \par
}}
\AtEndDocument{\bigskip{\footnotesize%
  \textsc{Department of Mathematics, Ningbo University, Ningbo, Zhejiang, People's Republic of China} \par
  \textit{E-mail address:} \texttt{1811071001@nbu.edu.cn} \par
}}
\AtEndDocument{\bigskip{\footnotesize%
  \textsc{Department of Mathematics, Ningbo University, Ningbo, Zhejiang, People's Republic of China} \par
  \textit{E-mail address:} \texttt{zhaobing@nbu.edu.cn} \par
}}
\AtEndDocument{\bigskip{\footnotesize%
  \textsc{Department of Mathematics, Ningbo University, Ningbo, Zhejiang, People's Republic of China} \par
  \textit{E-mail address:} \texttt{kanjiangbunnik@yahoo.com, jiangkan@nbu.edu.cn} \par
}}

\usepackage{lipsum}

\makeatletter
\newcommand*{\rom}[1]{\expandafter\@slowromancap\romannumeral #1@}
\makeatother
\begin{document}
\title{xxxx}
\date{}
 \title{On continuous images of  self-similar sets }
 \author{Yuanyuan Li, Jiaqi Fan, Jiangwen Gu, Bing Zhao, Kan Jiang\thanks{Kan Jiang is the corresponding author}}
\maketitle{}
\begin{abstract}
Let $(\mathcal{M}, c_k, n_k,\kappa)$ be a class of homogeneous Moran sets.
Suppose $f(x,y)\in C^3$ is a function defined on $\mathbb{R}^2$. Given $E_1, E_2\in(\mathcal{M}, c_k, n_k,\kappa) $, 
 in this paper, we prove,  under some checkable conditions on the partial  derivatives of $f(x,y)$,  that
$$f(E_1,E_2)=\{f(x,y):x\in E_1,y\in E_2\}$$ is exactly a closed interval or a union of finitely many closed intervals.
Similar results for the homogeneous self-similar sets  with arbitrary overlaps can be obtained. Further generalization is available for some inhomogeneous self-similar sets if we utilize the approximation theorem.
\end{abstract}

\section{Introduction}
Let $f(x,y)$ be a continuous function defined on $\mathbb{R}^2$, and $K$ be a self-similar set  \cite{Hutchinson} defined on $\mathbb{R}$. Denote
$$f(K,K)=\{f(x,y):x,y\in K\}.$$
We call $f(K,K)$ the continuous image of  $K$. It is natural to ask what is the exact form of $f(K,K)$. The first result in this direction is due to Steinhaus \cite{HS} who proved in 1917 the following results:
$$C+C=\{x+y:x,y\in C\}=[0,2], C-C=\{x-y:x,y\in C\}=[-1,1],$$ where $C$ is the middle-third Cantor set.
In 2019, Athreya, Reznick and Tyson \cite{Tyson} proved that
\[
C\div C=\left\{\dfrac{x}{y}:x,y\in C, y\neq0\right\}=\bigcup_{n=-\infty}^{\infty}\left[ 3^{-n}\dfrac{2}{3},3^{-n}\dfrac
{3}{2}\right] \cup\{0\}.
\]
In \cite{Gu}, Gu, Jiang, Xi and Zhao gave the topological structure of $$C\cdot C=\{xy:x,y\in C\}.$$ Note that $C\cdot C$ is not a closed interval. In \cite{Kan2019}, Tian et al. analyzed a class of self-similar sets with overlaps, and gave a necessary and sufficient condition under which the algebraic product of two self-similar sets is exactly a closed interval. However, for a general function $f(x,y)$ and $K$, to calculate the exact form of  $f(K,K)$ is a very difficult problem.  To the best of our knowledge, up to date, there are very few sufficient conditions for a general $f$ under which the continuous image of  $K$ is  a closed interval.

In this paper, we  shall consider homogeneous  Moran sets,  under some conditions,  such that the continuous image of the Moran sets  is exactly a closed interval or a union of finitely many closed intervals. Similar results can be proved for the homogeneous self-similar sets.

We now introduce how to construct a Moran set.
 Let $E=[0,1]$.  Given two sequences $\{c_k\}_{k=1}^{\infty}$ and $\{n_k\}_{k=1}^{\infty}$ with the property $c_kn_k<1$ for any $k\geq 1$, where $c_k\in (0,1/2)$ and $n_k\in \mathbb{N}_{\geq 2}$,  $k=1,2,3,\cdots$.
In the first level, we choose $n_1$ sub closed interval of $[0,1]$, denoted from left to right by
$$I^{(1)}_{1},I^{(1)}_{2}, \cdots I^{(1)}_{n_1}, $$
 such that
 \begin{itemize}
  \item [(1)]  $\cup_{i=1}^{n_1}I^{(1)}_{i}\subsetneq E$;
 \item [(2)] $\dfrac{|I_i^{(1)}|}{|E|}=c_1, 1\leq i\leq n_1$,  where $|A|$ denotes the length of $A$;
  \item [(3)] $$\min_{1\leq i\leq n_1}\{x: x\in I^{(1)}_{i}\}=\min\{x:x\in I^{(1)}_{1} \}=\min\{x:x\in E\}=0,$$ and  $$\max_{1\leq i\leq n_1}\{x: x\in I^{(1)}_{i}\}=\max\{x:x\in I^{(1)}_{n_1} \}=\max\{x:x\in E\}=1;$$
     \item [(4)] given $I^{(1)}_{i}$ and $I^{(1)}_{j}$ with $i<j$, then
   $$\dfrac{|I^{(1)}_{i}\cap I^{(1)}_{j}|}{|I^{(1)}_{i}|}\leq \kappa, 0\leq \kappa < 1.$$
 \end{itemize}
 For simplicity, we call   $I^{(1)}_{i}, 1\leq i\leq n_1$,  a basic interval with length $c_1$.

 Generally, for $k\geq 1$, given a basic interval  $I_{i}^{(k)}$, then
we choose $n_{k+1}$ sub closed interval of $I_{i}^{(k)}$, denoted from left to right by
$$I^{(k+1)}_{1},I^{(k+1)}_{2}, \cdots, I^{(k+1)}_{n_{k+1}}, $$
 such that
 \begin{itemize}
  \item [(1)]  $\cup_{i=1}^{n_{k+1}}I^{(k+1)}_{i}\subsetneq I_{i}^{(k)};$
 \item [(2)] $\dfrac{|I_j^{(k+1)}|}{|I_i^{(k)}|}=c_{k+1}, 1\leq j\leq n_{k+1}$;
  \item [(3)] $$\min_{1\leq i\leq n_{k+1}}\{x: x\in I^{(k+1)}_{i}\}=\min\{x: x\in I^{(k+1)}_{1}\}=\min\{ x:x\in I^{(k)}_{i}\},$$ and  $$\max_{1\leq i\leq n_{k+1}}\{x: x\in I^{(k+1)}_{i}\}=\max\{x: x\in I^{(k+1)}_{n_{k+1}}\}=\max\{ x:x\in I^{(k)}_{i}\};$$
   \item [(4)] given two $I^{(k+1)}_{i}$ and $I^{(k+1)}_{j}$ with $i<j$, then
   $$\dfrac{|I^{(k+1)}_{i}\cap I^{(k+1)}_{j}|}{|I^{(k+1)}_{i}|}\leq \kappa, 0\leq \kappa<1.$$
 \end{itemize}
 We say the above four conditions are the Moran conditions. We call each $I^{(k)}_{i}$ a $k$-th level basic interval with length $c_1c_2\cdots c_k$.
 Clearly, for the given sequences $\{c_k\}$ and $\{n_k\}$,  we may find  many classes of different basic intervals for each $k$-th level satisfying the  Moran conditions, i.e. for a given $k\ge1$, the union of all the basic intervals in the $k$-th level may differ as by the Moran conditions, we may adjust the locations of basic intervals.

 For each $k\geq 1$, if we can find some basic intervals satisfying the Moran conditions, denoted  their union by $$C_k=\bigcup_{i=1}^{n_1\cdots n_k}I^{(k)}_{i},$$ then we call
 $$M=\bigcap_{k=1}^{\infty}C_k$$ a Moran set generated by $\{c_k\}$  and $\{n_k\}$. We denote by $(\mathcal{M}, c_k,n_k, \kappa)$ all the  Moran sets generated by $\{c_k\}$ and  $\{n_k\}$.
 \begin{Remark}
 We have the following remarks for the definition of  the Moran sets.
 \begin{itemize}
 \item[(1)]
 By the third  Moran condition, we conclude that the convex hull of each Moran set is $[0,1]$.
  \item[(2)] By the Moran conditions, it is important to point out that the definition of our Moran sets has essential differences comparing with the  classical definition.  As we allow some ``overlaps", i.e. for  the classical definition of Moran set \cite{Wen1997}, we should assume that  any two  basic intervals in $k$-th level intersect at most one point. We,  however, allow serious  overlaps (intersections).

   \item[(3)] An important class of Moran sets is  the so-called homogeneous self-similar sets, i.e. for the attractor $K$ with  the following IFS,
   $$\{f_i(x)=\lambda x+a_i\}_{i=1}^{n},$$ where $ a_i\in \mathbb{R}, 0<\lambda<1/2.$ We assume that the  convex hull of $K$ is $[0,1]$, and $K\neq [0,1]$.
   For this case each $c_k=\lambda$ and $n_k=n$.
    \item[(4)] Without the last Moran condition, some basic intervals can coincide. To avoid this case, we  let $\kappa<1$. If $\kappa=0$, then the intersection of two basic intervals is an empty set or only one point. 
  \end{itemize}
 \end{Remark}
 Before we state the main result, we need some notation which can simplify the  conditions in the following theorems.
 For any real numbers $\ell_1,\ell_2$ and $f(x,y)\in C^3$, we  denote
 $$(\ell_1\partial_x+\ell_2\partial_y)^2f(x,y)=\ell_1^2\partial_{xx}f(x,y)+2\ell_1\ell_2\partial_{xy}f(x,y)+\ell_2^2\partial_{yy}f(x,y).$$
 Let $E_1,E_2\in(\mathcal{M}, c_k, n_k,\kappa)$.
Denote by  $C_k$ and $D_k$  the unions of  basic intervals in the $k$-th level for $E_1$ and $E_2$, respectively, where $k$ is some given positive integer. $C_k$ and $D_k$ have $n_1\cdots n_k$ closed intervals. Each interval has length $c_1c_2\cdots c_k.$
For a basic interval in the $k$-th level, denoted by $I_i^{(k)}$, we define
$$\widetilde{I_i^{(k)}}=\bigcup_{j=1}^{n_{k+1}}I_{j}^{(k+1)}.$$
Namely, $\widetilde{I_i^{(k)}}$ is a union of all the $(k+1)$-th level basic intervals contained in $I_i^{(k)}.$ We define the following notation which is motivated by the  Kronecker delta,
\begin{equation*}
   \delta_{xy}=\left\lbrace\begin{array}{cc}
                1,   \partial_x f(x,y)>0, \partial_y f(x,y)>0 \mbox{ for any }(x,y)\in [0,1]^2\\
                 1,   \partial_x f(x,y)<0, \partial_y f(x,y)<0 \mbox{ for any }(x,y)\in [0,1]^2\\
                  -1,   \partial_x f(x,y)<0, \partial_y f(x,y)>0 \mbox{ for any }(x,y)\in [0,1]^2\\
                   -1,   \partial_x f(x,y)>0, \partial_y f(x,y)<0 \mbox{ for any }(x,y)\in [0,1]^2.
                \end{array}\right.
\end{equation*}
Throughout the paper (in Theorems \ref{Main1} and \ref{SSS1}), we always assume that for any $(x,y)\in [0,1]^2$, for the partial derivatives $\partial_x f(x,y),  \partial_y f(x,y)$, there are only above four cases.
For any $k\geq 1$, 
define $\xi_k=c_k(2+(n_k-2)(1-\kappa))$.
 Now, we give the main results of this paper.
 \setcounter{Theorem}{1}
  \begin{Theorem}\label{Main1}
Let $E_1,E_2\in(\mathcal{M}, c_k, n_k,\kappa)$. Suppose that $f(x,y)\in C^3$. If for any $$(x,y)\in [0,1]\times [0,1]$$ and any $k\geq 1$, we have
\begin{equation*}
   \left\lbrace\begin{array}{cc}
   \partial_{x} f(x,y)\neq 0,  \partial_{y} f(x,y)\neq 0,\\
               (c_k\partial_{x}+ \delta_{xy}(\xi_k-1)\partial_{y})^2f(x,y)\geq 0\\
(\partial_{y}+ \delta_{xy}(\xi_k-1)\partial_{x})^2f(x,y)\geq 0\\
          1-\xi_k  \leq   \delta_{xy}\dfrac{\partial_{y}f(x,y)}{\partial_{x} f(x,y)}\leq \dfrac{c_k}{1-\xi_k}, 
                \end{array}\right.
\end{equation*}
then $f(E_1, E_2)$ is a closed interval.
Moreover, given some $p\geq 1$, 
 if for any $$(x,y)\in C_p\times D_p$$ and any $k\geq p$, we have the above inequalities,
then $f(E_1, E_2)$ is a union of finitely many closed intervals.
\end{Theorem}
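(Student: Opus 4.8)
\textbf{Strategy of the proof of Theorem \ref{Main1}.}
Put $C_{0}=D_{0}=[0,1]$, so that for each $k\ge 0$ the set $C_{k}$ (resp.\ $D_{k}$) is the union of the level-$k$ basic intervals of $E_{1}$ (resp.\ $E_{2}$), each of length $L_{k}:=c_{1}\cdots c_{k}$. Since $\partial_{x}f$ and $\partial_{y}f$ have constant nonzero sign on $[0,1]^{2}$, and since replacing $x$ by $1-x$ and/or $y$ by $1-y$ carries the class $(\mathcal M,c_{k},n_{k},\kappa)$ to itself, changes the sign of the corresponding partial derivative, and preserves the four conditions (after recording the new value of $\delta_{xy}$), we may assume $\partial_{x}f>0$ and $\partial_{y}f>0$ on $[0,1]^{2}$, i.e.\ $\delta_{xy}=1$; the other three sign patterns reduce to this one. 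The set $f(E_{1},E_{2})$ is compact, the sets $f(C_{k},D_{k})$ decrease because $C_{k+1}\times D_{k+1}\subseteq C_{k}\times D_{k}$, and a routine compactness argument (if $z$ lies in every $f(C_{k},D_{k})$, pick $(x_{k},y_{k})\in C_{k}\times D_{k}$ with $f(x_{k},y_{k})=z$, pass to a convergent subsequence, and use that every $C_{m}\times D_{m}$ is closed) gives $f(E_{1},E_{2})=\bigcap_{k\ge 0}f(C_{k},D_{k})$.

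The whole theorem is then reduced to a \emph{one-step lemma}: if the four conditions hold on $[0,1]^{2}$ for a given index $k$, then for every level-$(k-1)$ basic interval $I$ of $E_{1}$ and $J$ of $E_{2}$ one has $f(\widetilde I,\widetilde J)=f(I,J)$, where $\widetilde I=\bigcup_{j=1}^{n_{k}}I_{j}^{(k)}\subseteq I$ and $\widetilde J=\bigcup_{j=1}^{n_{k}}J_{j}^{(k)}\subseteq J$ are the level-$k$ refinements. Indeed, since $C_{k}=\bigcup_{I}\widetilde I$ and $D_{k}=\bigcup_{J}\widetilde J$ over level-$(k-1)$ intervals, the lemma gives $f(C_{k},D_{k})=\bigcup_{I,J}f(\widetilde I,\widetilde J)=\bigcup_{I,J}f(I,J)=f(C_{k-1},D_{k-1})$; iterating down to $k=1$, $f(C_{k},D_{k})=f(C_{0},D_{0})=f([0,1]^{2})$, which is a closed interval because $[0,1]^{2}$ is compact and connected, and hence $f(E_{1},E_{2})=f([0,1]^{2})$ is a closed interval. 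For the ``moreover'' part the hypotheses are assumed only on $C_{p}\times D_{p}$ and for $k\ge p$, so the lemma applies to every level-$(k-1)$ pair with $k\ge p+1$; iterating gives $f(C_{k},D_{k})=f(C_{p},D_{p})$ for all $k\ge p$, so $f(E_{1},E_{2})=f(C_{p},D_{p})=\bigcup_{I,J}f(I,J)$ over level-$p$ pairs, a union of at most $n_{1}\cdots n_{p}$ closed intervals.

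It remains to prove the one-step lemma. Fix $I,J$, put $L=|I|=|J|=L_{k-1}$. Since $f$ is strictly increasing in each variable, $f(I,J)=[m,M]$ with $m=f(\min I,\min J)$, $M=f(\max I,\max J)$, and by Moran condition (3) the two corners already lie in $\widetilde I\times\widetilde J$; so it suffices to show $(m,M)\subseteq f(\widetilde I,\widetilde J)$. A short length count from Moran conditions (3)--(4) shows that inside $[\min I,\max I]$ the set $\widetilde I$ is a union of $n_{k}$ closed intervals of length $c_{k}L$ whose complementary gaps have length at most $(1-\xi_{k})L$, where $\xi_{k}=c_{k}(2+(n_{k}-2)(1-\kappa))\in(0,1)$; likewise for $\widetilde J$. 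Fix $z\in(m,M)$. As $\nabla f\neq 0$ on $I\times J$, the level set $\gamma_{z}=\{(x,y)\in I\times J: f(x,y)=z\}$ is a single arc, the graph $y=\phi(x)$ of a strictly decreasing $C^{1}$ map over an interval $[x^{-},x^{+}]\subseteq[\min I,\max I]$, and the bounds $1-\xi_{k}\le\partial_{y}f/\partial_{x}f\le c_{k}/(1-\xi_{k})$ translate into $(1-\xi_{k})/c_{k}\le|\phi'|\le 1/(1-\xi_{k})$ along $\gamma_{z}$; moreover the two second-order hypotheses are equivalent, given the first-order ones, to $f$ being monotone and convex along the two extremal directions $(c_{k},-(1-\xi_{k}))$ and $(-(1-\xi_{k}),1)$, which is what keeps these slope estimates usable across cells and gaps. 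Now there is a dichotomy. Either $[x^{-},x^{+}]$ contains an entire sub-interval $I_{i}^{(k)}$ (this happens whenever the arc crosses $I\times J$ broadly, e.g.\ it always contains $I_{1}^{(k)}$ if $x^{-}=\min I$ and $x^{+}\ge\min I+c_{k}L$): then as $x$ runs over $I_{i}^{(k)}$, $\phi$ decreases by at least $(1-\xi_{k})L$, hence by at least the length of every gap of $\widetilde J$, so by continuity and monotonicity $\phi(x)\in\widetilde J$ for some $x\in I_{i}^{(k)}\subseteq\widetilde I$, giving $z\in f(\widetilde I,\widetilde J)$. Or $[x^{-},x^{+}]$ contains no entire sub-interval, in which case the endpoints of $\gamma_{z}$ are pushed onto the top and bottom edges of $I\times J$ at $x$-coordinates lying in $\widetilde I$ — here one uses $\min I,\max I\in\widetilde I$, examines the handful of possible edge configurations of the arc, and uses $|\phi'|\le 1/(1-\xi_{k})$, which forbids $\gamma_{z}$ from dropping the full height $|J|$ while $x$ stays inside one gap; since the top and bottom edges meet $\widetilde I\times\widetilde J$ (because $\min J,\max J\in\widetilde J$), again $\gamma_{z}\cap(\widetilde I\times\widetilde J)\neq\emptyset$. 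Either way $z\in f(\widetilde I,\widetilde J)$, which proves the lemma and the theorem.

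The main obstacle is the one-step lemma, precisely the assertion that the monotone arc $\gamma_{z}$ cannot be threaded through the union of the horizontal and vertical gaps of $\widetilde I\times\widetilde J$: this is where the exact balance among the gap bound $(1-\xi_{k})L$, the two slope bounds, and the two convexity conditions is essential, and where the various boundary configurations of $\gamma_{z}$ must be treated one at a time. An alternative to the level-curve argument is to order the $n_{k}^{2}$ refined cells by their $f$-values and show that consecutive cell images overlap; that route makes the role of the convexity hypotheses more transparent but is heavier on bookkeeping.
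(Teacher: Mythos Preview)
Your reduction to the one-step claim $f(I,J)=f(\widetilde I,\widetilde J)$ via the nested intersection $f(E_1,E_2)=\bigcap_k f(C_k,D_k)$ is exactly what the paper does (their Lemmas~2.1 and~2.2). Where you diverge is in the proof of that one-step claim. The paper does not argue with level curves: fixing a column $I_i^{(k)}$, it shows that consecutive images $f(I_i^{(k)},J_j^{(k)})$ and $f(I_i^{(k)},J_{j+1}^{(k)})$ overlap by comparing the two relevant \emph{corner values} $f(P_1),f(P_2)$ in the worst-case (maximal-gap) Moran configuration, via a second-order Taylor expansion; then it glues consecutive columns the same way. In that approach the two second-order hypotheses appear precisely as the sign of the Hessian remainder term, and the ratio hypothesis gives the sign of the first-order term.

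Your level-curve route is a legitimate alternative and, once the case analysis is written out, correct. It is in fact stronger than the paper's argument: the slope bounds $(1-\xi_k)/c_k\le |\phi'|\le 1/(1-\xi_k)$ follow directly from the ratio hypothesis at every point of $[0,1]^2$, and those bounds alone drive both halves of your dichotomy. Your one sentence tying the convexity hypotheses to ``keeping these slope estimates usable across cells and gaps'' is not accurate---nothing in your argument actually invokes them---so in your proof the two second-order conditions sit idle. This is not a defect in proving the theorem as stated, but you should be aware that you and the paper use the hypotheses differently.

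Your case~2 is the sketchy part. The clean organization is: the endpoints of $\gamma_z$ lie on the boundary of $I\times J$, so split on which pair of edges they meet. The left--right case forces $[x^-,x^+]=I\supseteq I_1^{(k)}$ and is already case~1. In the left--bottom and top--right cases, the case-2 assumption pins the short end of the arc inside $I_1^{(k)}$ or $I_{n_k}^{(k)}$, and the corresponding endpoint $(x^+,\min J)$ or $(x^-,\max J)$ is then visibly in $\widetilde I\times\widetilde J$. In the top--bottom case, $|\phi'|\le 1/(1-\xi_k)$ gives $x^+-x^-\ge(1-\xi_k)L$; combined with the gap bound $(1-\xi_k)L$ and the case-2 assumption, both $x^-,x^+$ cannot lie in gaps of $\widetilde I$, so one endpoint of $\gamma_z$ lands in $\widetilde I\times\widetilde J$. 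Writing these four lines explicitly would make the argument complete.
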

We compare our result with the classical Newhouse's thickness theorem \cite{Palis}.   Roughly speaking, our result can be viewed as the non-linear version of the Newhouse's thickness theorem as the conditions for the partial derivatives are similar to the definition of thickness (in particular, the ratio of first  order partial derivatives is very similar to the thickness).  
The Newhouse's thickness theorem needs to calculate the thickness while for our result we should analyze the partial derivatives.  
 The Moran sets are ``random" as the locations of basic intervals can be changed under the Moran conditions.  Therefore, it is not easy to calculate the thickness of a Moran set. 
For further comparison,
we  let $f(x,y)=x+sy$ with  a parameter $s\in \mathbb{R}$. Note that for  this case  $$\partial_{xx}f=0, \partial_{xy}f=0, \partial_{yy}f=0.$$ Here we  observe that for the above linear case, we cannot use any information of the second order partial derivatives.  
 However, we can still obtain partial result.
 \setcounter{Corollary}{2}
\begin{Corollary}
Let $E_1,E_2\in(\mathcal{M}, c_k, n_k,\kappa)$. Suppose that $f(x,y)=x+sy, s\neq 0$. If for any $$(x,y)\in [0,1]\times [0,1],$$ we have
\begin{equation*}
          \sup_{k}\left\{1-\xi_k \right\} \leq  | s|\leq \inf_{k}\left\{\dfrac{c_k}{1-\xi_k}   \right\},
\end{equation*}
then $f(E_1, E_2)$ is a closed interval. In particular, when  $s=1$ or $-1$, if for any $k\ge 1$, we have 
$$c_k+\xi_k\geq 1,$$ then
$$E_1+E_2=[0,2], E_1-E_2=[-1,1].$$  
\end{Corollary}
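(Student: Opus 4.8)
The plan is to obtain the Corollary as a direct specialization of Theorem~\ref{Main1}: for $f(x,y)=x+sy$ I would simply verify that the four displayed conditions of Theorem~\ref{Main1} collapse to the single two-sided inequality asserted here. First I would record the elementary facts $\partial_x f\equiv 1$ and $\partial_y f\equiv s$. Hence the non-vanishing requirement $\partial_x f\neq 0,\ \partial_y f\neq 0$ is exactly $s\neq 0$; moreover the sign pattern of $(\partial_x f,\partial_y f)$ on $[0,1]^2$ is constantly $(+,+)$ when $s>0$ and $(+,-)$ when $s<0$, so $\delta_{xy}=\mathrm{sign}(s)$ and therefore $\delta_{xy}\,\partial_y f/\partial_x f=\mathrm{sign}(s)\cdot s=|s|$. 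Since every second-order partial derivative of $f$ vanishes identically, the two convexity-type conditions $(c_k\partial_x+\delta_{xy}(\xi_k-1)\partial_y)^2 f\ge 0$ and $(\partial_y+\delta_{xy}(\xi_k-1)\partial_x)^2 f\ge 0$ hold automatically (both sides equal $0$). Thus only the fourth condition remains, and it reads $1-\xi_k\le |s|\le c_k/(1-\xi_k)$ for every $k\ge 1$.

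The next step is to check that this family of inequalities is equivalent to the hypothesis stated in the Corollary. For this one needs $0<\xi_k<1$, so that $1-\xi_k$ and $c_k/(1-\xi_k)$ are meaningful positive quantities with the orientation used in Theorem~\ref{Main1}; this follows from $\xi_k=c_k\bigl(2+(n_k-2)(1-\kappa)\bigr)\le c_k n_k<1$ together with $\xi_k\ge 2c_k>0$. Granting that, ``$1-\xi_k\le |s|$ for all $k$'' is the same as $\sup_k\{1-\xi_k\}\le |s|$, and ``$|s|\le c_k/(1-\xi_k)$ for all $k$'' is the same as $|s|\le\inf_k\{c_k/(1-\xi_k)\}$. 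So the hypothesis here is precisely the hypothesis of Theorem~\ref{Main1} applied to this $f$, and the theorem gives that $f(E_1,E_2)$ is a closed interval.

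For the ``in particular'' assertion with $s=1$ or $s=-1$, the lower bound $1-\xi_k\le 1$ is trivial and the upper bound $1\le c_k/(1-\xi_k)$ is exactly $c_k+\xi_k\ge 1$; hence the first part applies and $f(E_1,E_2)$ is a closed interval. It then only remains to identify which interval. Since $E_1,E_2\subseteq[0,1]$ we have $E_1+E_2\subseteq[0,2]$ and $E_1-E_2\subseteq[-1,1]$; and by the third Moran condition both $0$ and $1$ lie in every member of $(\mathcal{M},c_k,n_k,\kappa)$, so $0=0+0$ and $2=1+1$ belong to $E_1+E_2$, while $1=1-0$ and $-1=0-1$ belong to $E_1-E_2$. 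A closed interval that is contained in $[0,2]$ (respectively $[-1,1]$) and contains both endpoints must equal it, yielding $E_1+E_2=[0,2]$ and $E_1-E_2=[-1,1]$.

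I do not anticipate a real obstacle: the argument is a substitution into Theorem~\ref{Main1} followed by a two-line endpoint computation. The only points needing a moment's care are the bookkeeping of $\delta_{xy}$ for the two sign regimes of $s$, and the observation $0<\xi_k<1$, which is what makes the quotient bounds in Theorem~\ref{Main1} legitimate and correctly oriented.
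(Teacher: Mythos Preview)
Your proposal is correct and follows exactly the route intended by the paper: the Corollary is presented there as an immediate specialization of Theorem~\ref{Main1}, with the paper noting just beforehand that $\partial_{xx}f=\partial_{xy}f=\partial_{yy}f=0$ so that only the first-order ratio condition survives. Your additional care in verifying $0<\xi_k<1$ (via $\xi_k\le c_k n_k<1$ and $\xi_k\ge 2c_k>0$) and in identifying the endpoints for the $s=\pm 1$ case fills in details the paper leaves implicit, but the approach is the same.
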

\setcounter{Remark}{3}
\begin{Remark}
The conditions in Theorem \ref{Main1} are  a little strong. This is natural as  for the Moran sets, in certain sense, are ``random". That means the locations of basic intervals can be changed. However, for the self-similar sets, once the IFS's are given, the locations of basic intervals are determined.  Comparing with the Moran sets,  the conditions, which guarantee the continuous image of self-similar sets is a closed interval, can be mildly weakened. In particular, the conditions for the  constant $\kappa$ can be weakened. 
\end{Remark}
Now we introduce the main result of  the homogeneous self-similar sets. We need  some basic definitions.
Let  $K_1$ and $K_2$ be the attractors of the   IFS's $$\{f_i(x)=\lambda x+a_i\}_{i=1}^{n}, \{g_j(x)=\lambda x+b_j\}_{j=1}^{m},$$ respectively, where $ a_i,b_j\in \mathbb{R}, 0<\lambda<1.$
Without loss of generality, we may assume the convex hull of $K_1$ and $K_2$ is $[0,1]$, and $ f_1(0)=0,  f_n(1)=1, g_1(0)=0,  g_m(1)=1$, and  $$f_i(0)\leq  f_{i+1}(0),1\leq  i\leq n-1, g_j(0)\leq  g_{j+1}(0),1\leq  j\leq m-1.$$
     Let $$F_1=\{1\leq i\leq n-1:f_i(1)-f_{i+1}(0)<0\}, \tau_1=\max_{i\in F_1}\{f_{i+1}(0)-f_i(1)\}.$$
     $$F_2=\{1\leq j\leq m-1:g_j(1)-g_{j+1}(0)<0\}, \tau_2=\max_{j\in F_2}\{g_{j+1}(0)-g_j(1)\}.$$
     Note that if $F_i=\emptyset$, then $K_i$ is an interval. To avoid this trivial case, we assume that $F_i\neq \emptyset, i=1,2.$
     For any $(i_1\cdots i_k)\in \{1,2,\cdots, n\}^k, (j_1\cdots j_k)\in \{1,2,\cdots, m\}^k$, we call $f_{i_1\cdots i_k}([0,1])$ and $g_{j_1\cdots j_k}([0,1])$  basic intervals with respect to $K_1$ and $K_2$. Note that the length of $I=f_{i_1\cdots i_k}([0,1])$ and $J=g_{j_1\cdots j_k}([0,1])$ is $\lambda^k$, we define 
     $$\widetilde{I}=\cup^{n}_{i=1}f_{i_1\cdots i_ki}([0,1]), \widetilde{J}=\cup^{m}_{j=1}g_{j_1\cdots j_kj}([0,1]).$$
      We still use $C_k$ and $D_k$ to denote all the basic intervals of $K_1$ and  $K_2$ in the $k$-th level.    
Now we state the result  for the homogeneous self-similar sets.
\setcounter{Theorem}{4}
\begin{Theorem}\label{SSS1}
Let $K_1,K_2$ be the attractors  defined as above.
Suppose $f(x,y)\in C^3$. If for any $(x,y)\in [0,1]\times [0,1], 1\leq l\leq m-1, 1\leq j\leq n-1$, we have
\begin{equation*}
   \left\lbrace\begin{array}{cc}
                 \partial_x f(x,y)\neq 0, \partial_y f(x,y)\neq 0\\
                (\lambda\delta_{xy}\partial_{x}+(g_l(1)-g_{l+1}(0))\partial_{y})^2f(x,y)\geq 0\\
              ( \delta_{xy}(f_j(1)-f_{j+1}(0))\partial_{x}+\partial_{y})^2 f(x,y)\geq 0\\
                \tau_1\leq\delta_{xy}\dfrac{\partial_y f(x,y)}{\partial_x f(x,y)}\leq\dfrac{\lambda}{\tau_2}
                                \end{array}\right.
\end{equation*}
then $f(K_1,K_2)$ is a closed interval. Moreover, given $p\geq 1$, 
 if for any $$(x,y)\in C_p\times D_p,$$ we have the above inequalities,
then $f(K_1, K_2)$ is a union of finitely many closed intervals.\end{Theorem}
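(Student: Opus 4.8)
The plan is to follow the structure of the proof of Theorem \ref{Main1}, exploiting the exact affine structure of $K_1,K_2$ to replace the uniform overlap constant by the individual level‑one data. The whole statement reduces to one local claim together with two standard reductions. First, since $K_1\times K_2=\bigcap_k(C_k\times D_k)$ is a nested intersection of compacta and $f$ is continuous, $f(K_1,K_2)=\bigcap_k f(C_k\times D_k)$; and since $C_{k+1}=\bigcup_I\widetilde I$ and $D_{k+1}=\bigcup_J\widetilde J$ over the level‑$k$ basic intervals of $K_1$ and $K_2$, one has $f(C_{k+1}\times D_{k+1})=\bigcup_{I,J}f(\widetilde I\times\widetilde J)$. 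So it suffices to prove: \emph{if $f$ satisfies the hypotheses on $I\times J$ for basic intervals $I,J$ of the same level, then $f(\widetilde I\times\widetilde J)=f(I\times J)$}. Granting this on $[0,1]^2$ gives $f(C_{k+1}\times D_{k+1})=f(C_k\times D_k)$ for all $k\ge0$, hence $f(K_1,K_2)=f([0,1]^2)$, a closed interval since $[0,1]^2$ is connected; granting it only at levels $\ge p$ gives, for each level‑$p$ rectangle $R=I\times J$, that $f\big((I\cap K_1)\times(J\cap K_2)\big)=f(R)$, whence $f(K_1,K_2)=\bigcup_R f(R)$ is a union of at most $(nm)^p$ closed intervals.

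Second, this local claim reduces to its level‑zero case $f(C_1\times D_1)=f([0,1]^2)$. Writing $I=f_{\mathbf i}([0,1])$, $J=g_{\mathbf j}([0,1])$ of common level $k$ and $\Phi=f_{\mathbf i}\times g_{\mathbf j}$, the affine map $\Phi$ sends $[0,1]^2$ to $I\times J$ and $C_1\times D_1$ to $\widetilde I\times\widetilde J$, and $\widetilde f:=f\circ\Phi$ has $\partial_x\widetilde f=\lambda^{k}\partial_xf\circ\Phi$, $\partial_y\widetilde f=\lambda^{k}\partial_yf\circ\Phi$ and every second partial multiplied by $\lambda^{2k}$. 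Hence the sign pattern of the first partials, the ratio $\delta_{xy}\partial_y\widetilde f/\partial_x\widetilde f$, and both quadratic forms are inherited on $[0,1]^2$ from the hypotheses of $f$ on $I\times J\subseteq[0,1]^2$, so $\widetilde f$ again satisfies the hypotheses of Theorem \ref{SSS1} and $f(\widetilde I\times\widetilde J)=\widetilde f([0,1]^2)=f(I\times J)$. The same scaling also yields the level‑$p$ version, since any rectangle of level $\ge p$ lies inside $C_p\times D_p$ and the hypotheses transfer to its rescaling.

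For the level‑zero statement, reflect in $x$ and/or $y$ so that $\partial_xf>0,\partial_yf>0$ on $[0,1]^2$ (this permutes the four cases in the definition of $\delta_{xy}$ and preserves everything); then $f$ is strictly increasing in each variable, $f([0,1]^2)=[f(0,0),f(1,1)]$, and $f(I_i\times J_l)=[f(f_i(0),g_l(0)),f(f_i(1),g_l(1))]$ for every basic rectangle. I would then build a chain of basic rectangles $I_1\times J_1=R^{(0)},\dots,R^{(L)}=I_n\times J_m$ with $f(R^{(s)})\cap f(R^{(s+1)})\neq\emptyset$ throughout; then $\bigcup_sf(R^{(s)})$ is a connected subset of $[f(0,0),f(1,1)]$ containing both endpoints, hence equals it, and so does $f(C_1\times D_1)$. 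Consecutive rectangles differ by a vertical step $I_i\times J_l\to I_i\times J_{l+1}$, a horizontal step $I_i\times J_l\to I_{i+1}\times J_l$, or a combined corner step, and whether the two images overlap is decided by the sign of a line integral of $\partial_xf\cdot\big(\delta x-\delta_{xy}\frac{\partial_yf}{\partial_xf}\,\delta y\big)$ along the segment joining the relevant corners, $(\delta x,\delta y)$ being that segment's increment; for steps that do not straddle a maximal gap the bounds $\tau_1\le\delta_{xy}\partial_yf/\partial_xf\le\lambda/\tau_2$ already fix this sign. (Equivalently one may argue with level curves: for $v\in[f(0,0),f(1,1)]$ the set $\{f=v\}$ is the graph of a continuous decreasing function $y=\psi(x)$; it meets $I_1\times J_1$ if $v\le f(\lambda,\lambda)$ and $I_n\times J_m$ if $v\ge f(1-\lambda,1-\lambda)$, and for the remaining $v$ ---which forces $\lambda<1/2$--- one shows it meets $C_1\times D_1$ because otherwise $|\psi'|=\partial_xf/\partial_yf\ge\tau_2/\lambda$ makes every maximal stretch of $x$ on which the curve stays inside one gap of $D_1$ shorter than $\lambda$, so no basic interval $I_i$ lying in the $x$‑range of the curve can be covered by such stretches.)

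The only place the $C^3$ hypothesis and the second‑order inequalities are used ---and, I expect, the main obstacle--- is the finitely many steps (resp. the extreme positions of the level curve) attached to the maximal gaps, where the relevant first‑order inequality is attained: $\delta_{xy}\partial_yf/\partial_xf=\tau_1$ or $=\lambda/\tau_2$. There the connecting segment runs exactly in the direction $(\delta_{xy}(f_j(1)-f_{j+1}(0)),1)$ or $(\lambda\delta_{xy},g_l(1)-g_{l+1}(0))$, i.e. along the anti‑diagonal of a maximal gap rectangle, so the hypotheses $(\delta_{xy}(f_j(1)-f_{j+1}(0))\partial_x+\partial_y)^2f\ge0$ and $(\lambda\delta_{xy}\partial_x+(g_l(1)-g_{l+1}(0))\partial_y)^2f\ge0$ say precisely that $f$ restricted to that segment is convex, and convexity is exactly what keeps the two images from separating. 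The remaining work is the bookkeeping that makes this casework exhaustive ---handling $\lambda\ge1/2$ (where two corner rectangles already suffice), severe overlaps (so that $C_1$ or $D_1$ has few connected components), and the cases $n=2$ or $m=2$ where there is no interior basic interval--- which is routine but lengthy, mirroring the proof of Theorem \ref{Main1}.
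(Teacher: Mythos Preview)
Your reduction to the level-zero claim $f(C_1\times D_1)=f([0,1]^2)$ via the affine rescaling is correct and a genuine simplification over the paper, which repeats the estimate at every level. The problem lies in the level-zero argument itself. For a horizontal step $I_i\times J_l\to I_{i+1}\times J_l$ the two image intervals overlap iff $f(f_i(1),g_l(1))\ge f(f_{i+1}(0),g_l(0))$; the displacement is $(\delta x,\delta y)=(f_i(1)-f_{i+1}(0),\,\lambda)$, so your line integral is nonnegative only when $\partial_yf/\partial_xf\ge (f_{i+1}(0)-f_i(1))/\lambda$, in the worst case $\tau_1/\lambda$. The hypothesis gives only $\partial_yf/\partial_xf\ge\tau_1$, which is strictly weaker since $\lambda<1$; a corner step does no better, because it still has $|\delta y|\le\lambda$. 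The paper avoids this by never chaining through individual rectangles: it first shows each \emph{column} $f(I_i\times\widetilde J)$ is an interval (these are your vertical steps), and then compares adjacent columns using the full parent $J$, so the relevant displacement becomes $(f_i(1)-f_{i+1}(0),\,|J|)$ and the lower bound $\partial_yf/\partial_xf\ge\tau_1$ is exactly what is required. The third hypothesis $(\delta_{xy}(f_j(1)-f_{j+1}(0))\partial_x+\partial_y)^2f\ge0$ is tailored to that full-height vector, not to any single-row comparison; your level-curve sketch runs into the same obstruction, since the slope bound $|\psi'|\ge\tau_2/\lambda$ only controls crossings of $D_1$-gaps over an $x$-range of length $\lambda$, not over a $C_1$-gap of length up to $\tau_1$.

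A secondary point: your account of where the second-order hypotheses enter is off in both directions. With your own line-integral device the integrand is pointwise $\ge0$ at every step, maximal gap or not, so convexity along the anti-diagonal is never invoked; equality in the ratio bound changes nothing. In the paper's argument the difference $f(P)-f(Q)$ is instead expanded by second-order Taylor with Lagrange remainder, so the first-order part is evaluated at a single point and the quadratic remainder must be $\ge0$ at an unknown intermediate point---hence the second-order conditions are used at every comparison, not only at the extremal ones.
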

For some inhomogeneous self-similar sets, we may still use the above result. As for a general self-similar set, it can be approximated by a sub  homogeneous  self-similar set in the sense of Hausdorff dimension \cite{PS}. Therefore, we have the following result.
\setcounter{Corollary}{5}
\begin{Corollary}
 Given $f(x,y)\in C^3$.
Let $K\subset \mathbb{R}$ be any self-similar set with positive ratios. If there exists a sub homogeneous self-similar set, denoted by $K^{\prime}$, such that
 \begin{itemize}
 \item[(1)] $conv(K^{\prime})=conv(K)=[0,1]$;
 \item[(2)] for any $(x,y)\in conv(K)\times conv(K)$, $f(x,y)$ satisfies the conditions in the above theorem with respect to the IFS of $K^{\prime}$, where $conv(.)$ denotes the convex hull;
 \end{itemize}
 then $f(K,K)$ is a closed interval.
\end{Corollary}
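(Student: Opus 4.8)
The plan is to sandwich $f(K,K)$ between $f(K',K')$ and the full image $f([0,1]^2)$, and then to show that these two outer sets coincide. Since $K'$ is a sub self-similar set of $K$, we have $K'\subseteq K\subseteq conv(K)=[0,1]$, and hence
$$f(K',K')\subseteq f(K,K)\subseteq f\big(conv(K)\times conv(K)\big)=f([0,1]^2).$$
Because $[0,1]^2$ is compact and connected and $f$ is continuous, $f([0,1]^2)$ is a closed interval, say $[\mu,\nu]$ with $\mu=\min_{[0,1]^2}f$ and $\nu=\max_{[0,1]^2}f$. So it suffices to prove that $f(K',K')=[\mu,\nu]$.

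First I would apply Theorem \ref{SSS1} with $K_1=K_2=K'$. This is admissible: $K'$ is a homogeneous self-similar set with a single contraction ratio, so the two copies of its IFS automatically share the same $\lambda$; its convex hull is $[0,1]$ by hypothesis (1); and by hypothesis (2), $f$ satisfies on $[0,1]^2$ exactly the inequalities required by Theorem \ref{SSS1} for the IFS of $K'$. (If $K'=[0,1]$ there is nothing to prove, so we may assume $K'$ is a genuine Cantor-type set, i.e. the associated index set $F_1$ is nonempty.) Theorem \ref{SSS1} then tells us that $f(K',K')$ is a closed interval.

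It remains to identify that interval with $[\mu,\nu]$. Since $\partial_x f$ and $\partial_y f$ are continuous and nowhere vanishing on the connected square $[0,1]^2$, each has constant sign there, so $f$ is strictly monotone in each variable separately; therefore both $\mu$ and $\nu$ are attained at corners of the square, that is, at points of $\{0,1\}^2$. As $conv(K')=[0,1]$ and $K'$ is compact, $0,1\in K'$, whence $\{0,1\}^2\subseteq K'\times K'$ and both $\mu,\nu\in f(K',K')$. A closed interval that lies inside $[\mu,\nu]$ and contains both of its endpoints must equal $[\mu,\nu]$; thus $f(K',K')=[\mu,\nu]=f([0,1]^2)$. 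Plugging this back into the displayed chain of inclusions forces $f(K,K)=f([0,1]^2)$, a closed interval, as claimed.

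The only step that is not pure bookkeeping is this last identification: one needs to know that the interval produced by Theorem \ref{SSS1} is the largest conceivable one, and this is supplied by the monotonicity-and-corners observation together with the fact that the two extreme points $0$ and $1$ of the square already belong to the small self-similar set $K'$. Verifying that $K'$ is an admissible input to Theorem \ref{SSS1}, and that a sub self-similar set is literally a subset, are the remaining (routine) points.
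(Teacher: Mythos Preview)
Your proof is correct and follows exactly the sandwich argument the paper uses (implicitly) for this corollary, as illustrated in Examples~1.9 and~1.10: one traps $f(K,K)$ between $f(K',K')$ and $f([0,1]^2)$ and shows the outer two coincide. The paper does not spell out why $f(K',K')$ fills all of $f([0,1]^2)$, tacitly relying on the fact that the proof of Theorem~\ref{SSS1} actually yields $f(K',K')=f([0,1],[0,1])$ (since it shows $f(C_k,D_k)=f(C_{k-1},D_{k-1})$ for every $k$, starting from $[0,1]$); your monotonicity-and-corners argument is a clean, self-contained alternative for this step that uses only the stated conclusion of the theorem.
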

For the middle-third Cantor set, we have the following results.
\begin{Corollary}\label{Cantor1}
Let $C$ be the attractor of the following IFS
$$\left\{f_1(x)=\dfrac{x}{3},f_2(x)=\dfrac{x+2}{3} \right\}.$$
Suppose $f(x,y)\in C^3$ is a function defined on $\mathbb{R}^2$.
If for any $$(x,y)\in [0,1]\times [0,1],$$ we have
\begin{equation*}
  \left\lbrace\begin{array}{cc}
                \partial_x f(x,y)>0, \partial_y f(x,y)>0\\
                  \partial_{xx} f(x,y)-6\partial_{xy}f(x,y)+ 9\partial_{yy} f(x,y)\geq 0\\
               \partial_{xx} f(x,y)-2\partial_{xy}f(x,y)+ \partial_{yy} f(x,y)\geq 0\\
                  \dfrac{1}{3}\leq  \dfrac{\partial_y f(x,y)}{\partial_x f(x,y)}\leq 1, 
                \end{array}\right.
\end{equation*}
then  $f(C,C)$ is a closed interval. 
\end{Corollary}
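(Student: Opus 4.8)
The plan is to obtain Corollary~\ref{Cantor1} as a direct specialization of Theorem~\ref{SSS1} to the case $K_1=K_2=C$, so the only real work is bookkeeping: identifying the IFS data, pinning down the value of $\delta_{xy}$, and checking that the four displayed inequalities of the corollary are exactly what the four conditions of Theorem~\ref{SSS1} become after substitution. First I would record the data of the IFS $\{f_1(x)=x/3,\ f_2(x)=(x+2)/3\}$: here $n=m=2$, $\lambda=1/3$, and
\[
f_1(0)=0,\quad f_1(1)=\tfrac13,\quad f_2(0)=\tfrac23,\quad f_2(1)=1,
\]
with the identical values for $g_1,g_2$. Consequently $f_1(1)-f_2(0)=g_1(1)-g_2(0)=-\tfrac13$, so $F_1=F_2=\{1\}\neq\emptyset$ and $\tau_1=\tau_2=f_2(0)-f_1(1)=\tfrac13$; in particular $C\neq[0,1]$, and the non-degeneracy assumptions ($F_i\neq\emptyset$, convex hull $[0,1]$) needed to invoke Theorem~\ref{SSS1} hold. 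The only admissible indices in that theorem are $l=1$ and $j=1$.

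Next I would fix $\delta_{xy}$. Since $f\in C^3$, the functions $\partial_x f$ and $\partial_y f$ are continuous on the connected square $[0,1]^2$, so the hypothesis $\partial_x f>0$ and $\partial_y f>0$ on $[0,1]^2$ places us in the first of the four listed sign patterns; hence the standing assumption of Theorem~\ref{SSS1} is satisfied and $\delta_{xy}=1$. Then I would substitute $\lambda=1/3$, $\delta_{xy}=1$, $f_1(1)-f_2(0)=g_1(1)-g_2(0)=-1/3$ into the four conditions of Theorem~\ref{SSS1}, using $(\ell_1\partial_x+\ell_2\partial_y)^2 f=\ell_1^2\partial_{xx}f+2\ell_1\ell_2\partial_{xy}f+\ell_2^2\partial_{yy}f$. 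The condition $(\lambda\delta_{xy}\partial_x+(g_1(1)-g_2(0))\partial_y)^2 f\geq0$ becomes $\tfrac19\bigl(\partial_{xx}f-2\partial_{xy}f+\partial_{yy}f\bigr)\geq0$, i.e. $\partial_{xx}f-2\partial_{xy}f+\partial_{yy}f\geq0$; the condition $(\delta_{xy}(f_1(1)-f_2(0))\partial_x+\partial_y)^2 f\geq0$ becomes $\tfrac19\bigl(\partial_{xx}f-6\partial_{xy}f+9\partial_{yy}f\bigr)\geq0$, i.e. $\partial_{xx}f-6\partial_{xy}f+9\partial_{yy}f\geq0$; and $\tau_1\leq\delta_{xy}\,\partial_y f/\partial_x f\leq\lambda/\tau_2$ becomes $\tfrac13\leq\partial_y f/\partial_x f\leq1$, since $\lambda/\tau_2=(1/3)/(1/3)=1$. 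Together with $\partial_x f\neq0$, $\partial_y f\neq0$ (implied by the positivity hypothesis), these are precisely the hypotheses of Corollary~\ref{Cantor1}, so Theorem~\ref{SSS1} applies and $f(C,C)$ is a closed interval.

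I do not expect a genuine obstacle here; the statement is a worked example of the general theorem. The two points deserving a moment's attention are (i) justifying that the sign hypothesis legitimately forces $\delta_{xy}=1$ rather than leaving it ambiguous (handled by continuity of the partials on the connected square), and (ii) tracking signs correctly when expanding the quadratic-form expressions, since both $g_1(1)-g_2(0)$ and $f_1(1)-f_2(0)$ are negative; getting these right is exactly what produces the coefficients $-2,+9$ (and $-6,+1$) appearing in the corollary.
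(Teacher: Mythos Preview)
Your proposal is correct and follows exactly the route the paper intends: Corollary~\ref{Cantor1} is stated without separate proof because it is the specialization of Theorem~\ref{SSS1} to $K_1=K_2=C$, and your bookkeeping (identifying $\lambda=\tau_1=\tau_2=1/3$, $\delta_{xy}=1$, and expanding the two quadratic-form conditions) reproduces the four displayed inequalities verbatim. One small remark: you do not actually need the continuity argument in (i), since the hypothesis $\partial_x f>0$, $\partial_y f>0$ on all of $[0,1]^2$ is already literally the first case in the definition of $\delta_{xy}$.
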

Note that  the second  and third conditions in Corollary \ref{Cantor1} can be easily checked if we replace them by  the following conditions. 
\begin{Corollary}\label{Cantor2}
Let $C$ be the attractor of the following IFS
$$\left\{f_1(x)=\dfrac{x}{3},f_2(x)=\dfrac{x+2}{3} \right\}.$$
Suppose $f(x,y)\in C^3$ is a function defined on $\mathbb{R}^2$.
If for any $$(x,y)\in [0,1]\times [0,1],$$ we have

\begin{equation*}
  \left\lbrace\begin{array}{cc}
               \partial_x f(x,y)>0, \partial_y f(x,y)>0\\
                  \partial_{xx} f(x,y)\geq 0, \partial_{xy}f(x,y)\leq 0, \partial_{yy} f(x,y)\geq 0\\
                  \dfrac{1}{3}\leq\dfrac{\partial_y f(x,y)}{\partial_x f(x,y)}\leq1,
                \end{array}\right.
\end{equation*}
then  $f(C,C)$ is a closed interval. 
\end{Corollary}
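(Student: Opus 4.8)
The plan is to deduce Corollary~\ref{Cantor2} directly from Corollary~\ref{Cantor1}: I will check that the hypotheses listed here are (strictly) stronger than those of Corollary~\ref{Cantor1}, after which the conclusion is immediate. No new analysis of $f(C,C)$ itself is needed, since the topological structure has already been pinned down in the previous corollary.

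The first-order conditions $\partial_x f>0$, $\partial_y f>0$ and the ratio bound $\frac13\le \partial_y f/\partial_x f\le 1$ appear verbatim in both statements, so nothing has to be done for them. It remains only to verify the two second-order inequalities
\[
\partial_{xx} f-6\partial_{xy}f+9\partial_{yy}f\ge 0,\qquad \partial_{xx}f-2\partial_{xy}f+\partial_{yy}f\ge 0
\]
on $[0,1]^2$. This is exactly where the three separate sign hypotheses $\partial_{xx}f\ge 0$, $\partial_{yy}f\ge 0$, $\partial_{xy}f\le 0$ enter: each of the two displayed expressions is a linear combination of the form $\partial_{xx}f+c_1(-\partial_{xy}f)+c_2\partial_{yy}f$ with $c_1,c_2>0$ (namely $(c_1,c_2)=(6,9)$ for the first and $(c_1,c_2)=(2,1)$ for the second), and under our assumptions every one of the three summands $\partial_{xx}f$, $-\partial_{xy}f$, $\partial_{yy}f$ is nonnegative at every point of $[0,1]^2$. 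Hence both inequalities hold pointwise, the hypotheses of Corollary~\ref{Cantor1} are met, and $f(C,C)$ is a closed interval.

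I do not expect any genuine obstacle; the whole point of the corollary is that the (potentially awkward to verify) quadratic-form positivity conditions of Corollary~\ref{Cantor1} are dominated by the much more transparent sign conditions on the pure and mixed second partials. For a reader who wants a self-contained derivation not routed through Corollary~\ref{Cantor1}, the same reduction can be run one step earlier: specialising Theorem~\ref{SSS1} to the IFS $\{x/3,(x+2)/3\}$ gives $\lambda=\tau_1=\tau_2=1/3$ and $\delta_{xy}=1$, so the two second-order conditions of Theorem~\ref{SSS1} become, up to the positive factor $1/9$, precisely $(\partial_x-\partial_y)^2 f\ge 0$ and $(\partial_x-3\partial_y)^2 f\ge 0$; expanding these recovers the two displayed inequalities above, which are again controlled by $\partial_{xx}f+c_1(-\partial_{xy}f)+c_2\partial_{yy}f\ge 0$, and the conclusion follows in the same manner.
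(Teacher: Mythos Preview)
Your proposal is correct and matches the paper's intended argument exactly: the paper does not give a separate proof of Corollary~\ref{Cantor2} but introduces it with the remark that ``the second and third conditions in Corollary~\ref{Cantor1} can be easily checked if we replace them by the following conditions,'' which is precisely the verification you carry out. Your additional paragraph tracing the specialisation back to Theorem~\ref{SSS1} is also accurate and goes slightly beyond what the paper spells out.
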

In terms of  Corollaries  \ref{Cantor1} and \ref{Cantor2}, it is easy to prove the following two results.
Let $K$ be the attractor of the IFS
$$\left\{f_1(x)=\dfrac{x}{3},f_2(x)=\dfrac{x}{4},f_3(x)=\dfrac{x+2}{3}\right\}.$$
Set 
$$H=f([0,1],[0,1])=\left[\min_{(x,y)\in [0,1]^2}f(x,y),\max_{(x,y)\in [0,1]^2}f(x,y)\right].$$
\setcounter{Example}{8}
\begin{Example}
Let $f(x,y)=x^2+y^2+\alpha x+\beta y+\gamma xy$, where
\begin{equation*}
\left\lbrace\begin{array}{cc}
 2+\alpha+\gamma<3\beta\\
 \alpha>2+\beta+\gamma\\
 \alpha, \beta>0\\
 0<\gamma<2.
 \end{array}\right.
\end{equation*}
Then
 $$ f(K,K)=H.$$
By Corollary \ref{Cantor1}, we have $$f(C,C)=H.$$  Therefore, $$H=f([0,1], [0,1])\supset f(K,K)\supset f(C,C)=H.$$
\end{Example}
Similarly, by Corollary \ref{Cantor2} we obtain the following example.
\begin{Example}
Let $f(x,y)=\sin(\alpha xy)+\beta x+\gamma y$, where
\begin{equation*}
\left\lbrace\begin{array}{cc}
 -1<\alpha<0\\
10<\beta<\infty\\
\gamma=\dfrac{\beta}{2}.
 \end{array}\right.
\end{equation*}
Then
 $$ f(K,K)=H.$$
\end{Example}
This paper is arranged as follows. In section 2 and 3 we give the proofs of Theorems \ref{Main1} and \ref{SSS1}. In section 4, we give some remarks. 
\section{Proof of Theorem \ref{Main1}}
Let $E_1, E_2$ be two Moran sets from the class $(\mathcal{M}, c_k,n_k, \kappa).$ Namely, there exist some $C_k$ $D_k$, i.e. the union of all the basic intervals in the $k$-level, such that 
$$E_1=\cap_{k=1}^{\infty}C_k, E_2=\cap_{k=1}^{\infty}D_k.$$
The following lemma is obvious.
 \begin{Lemma}\label{key1}
Let  $f:\mathbb{R}^2\to \mathbb{R}$ be a continuous function. Then
$$f(E_1,E_2)=\bigcap_{k=1}^{\infty}f(C_k,D_k).$$
\end{Lemma}
\begin{proof}
The proof is due to two facts, $f$ is continuous and $C_k\supset C_{k+1}, D_k\supset D_{k+1}$ for any $k\ge1.$
\end{proof}
To prove $f(E_1,E_2)$ is an interval, it  suffices to prove that
\begin{itemize}
\item [(1)] for any $k\geq 1$, we have  $f(C_k,D_k)=f(C_{k+1},D_{k+1});$
\item [(2)]  $f(C_1, D_1)$ is an interval.
\end{itemize}
To guarantee $(1)$ and $(2)$, we have the following sufficient condition.
\begin{Lemma}\label{key2}
Let  $k\geq 0$. Suppose  $I$ and $J$ are  any two basic intervals in $C_k$ and $D_k$ respectively.
If
 $$f(I, J)=f(\widetilde{I}, \widetilde{J}),$$
 then
 $f(C_k,D_k)=f(C_{k+1},D_{k+1}).$ Here if $k=0$,  then we assume  $$I=J=C_0=D_0=[0,1].$$ 
\end{Lemma}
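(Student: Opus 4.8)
The plan is to reduce the asserted equality of images to an elementary distributive identity for images of product sets over finite unions, combined with the observation that the $(k+1)$-th level of the Moran construction refines the $k$-th level one.

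First I would write $C_k=\bigcup_{a=1}^{N}I_a$ and $D_k=\bigcup_{b=1}^{N}J_b$, where $N=n_1\cdots n_k$ and $I_1,\dots,I_N$ (resp. $J_1,\dots,J_N$) are the $k$-th level basic intervals of $E_1$ (resp. $E_2$); for $k=0$ we take $N=1$ and $I_1=J_1=[0,1]$ according to the stated convention. Since for any function $f$ one has the set identity $f(A\cup A',\,B\cup B')=f(A,B)\cup f(A,B')\cup f(A',B)\cup f(A',B')$, iterating in each coordinate gives
$$f(C_k,D_k)=\bigcup_{a=1}^{N}\bigcup_{b=1}^{N}f(I_a,J_b).$$

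Next I would record that, by the first Moran condition, every $(k+1)$-th level basic interval is contained in exactly one $k$-th level basic interval, so that $C_{k+1}=\bigcup_{a=1}^{N}\widetilde{I_a}$ and $D_{k+1}=\bigcup_{b=1}^{N}\widetilde{J_b}$ (for $k=0$ this reads $\widetilde{I_1}=C_1$, $\widetilde{J_1}=D_1$). The same distributive identity then yields
$$f(C_{k+1},D_{k+1})=\bigcup_{a=1}^{N}\bigcup_{b=1}^{N}f(\widetilde{I_a},\widetilde{J_b}).$$

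Finally, the hypothesis applies to every pair of basic intervals $I_a\subseteq C_k$, $J_b\subseteq D_k$, giving $f(I_a,J_b)=f(\widetilde{I_a},\widetilde{J_b})$ for all $a,b$; substituting termwise into the two displays above gives $f(C_k,D_k)=f(C_{k+1},D_{k+1})$. There is no substantial obstacle here, since the lemma is essentially a bookkeeping step; the only points that need care are the distributivity of the image over finite unions in each coordinate and the fact that the $(k+1)$-th level decomposition refines the $k$-th level one, both of which are immediate from the construction.
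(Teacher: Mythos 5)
Your argument is exactly the paper's: decompose $f(C_k,D_k)$ as the double union $\bigcup_{a,b}f(I_a,J_b)$, apply the hypothesis termwise to get $\bigcup_{a,b}f(\widetilde{I_a},\widetilde{J_b})$, and recombine using distributivity of the image over unions to obtain $f(C_{k+1},D_{k+1})$. The proposal is correct and matches the paper's proof step for step.
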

\begin{proof}
Let $$C_k=\cup_{i=1}^{t_k}I_{i}^{(k)}, D_k=\cup_{j=1}^{t_k}J_{j}^{(k)}, t_k=n_1n_2\cdots n_k, k\geq 1. $$
Then we have
\begin{eqnarray*}
f(C_k, D_k)&=& \cup_{1\leq i\leq t_k} \cup_{1\leq j\leq t_k}f(I_{i}^{(k)}, J_{j}^{(k)})\\&=
& \cup_{1\leq i\leq t_k} \cup_{1\leq j\leq t_k}f(\widetilde{I_{i}^{(k)}},\widetilde{J_{j}^{(k)}})\\&=&
f(\cup_{1\leq i\leq t_k}\widetilde{I_{i}^{(k)}},\cup_{1\leq j\leq t_k}\widetilde{J_{j}^{(k)}})\\&=&f(C_{k+1}, D_{k+1}).
\end{eqnarray*}
\end{proof}
\begin{proof}[\bf{Proof of Theorem \ref{Main1}}]
Theorem \ref{Main1} contains four cases, i.e.  for any $(x,y)\in [0,1]^2$, we have 
\begin{equation*}
\left\lbrace\begin{array}{cc}
\partial_x f(x,y)>0, \partial_y f(x,y)>0\\
\partial_x f(x,y)<0, \partial_y f(x,y)<0\\
\partial_x f(x,y)<0, \partial_y f(x,y)>0\\
\partial_x f(x,y)>0, \partial_y f(x,y)<0.
 \end{array}\right.
\end{equation*}
For simplicity, we only prove the first case. For the other three cases, we give the outline of the proof.  The proof  of $f(E_1, E_2)$ is exactly  finitely many closed intervals is simiar to the previous case, we omit the details for this case.

Now we suppose $$\partial_x f(x,y)>0, \partial_y f(x,y)>0 \mbox{ for any }(x,y)\in [0,1]^2.$$
 Let $I=[a,a+t], J=[b,b+t]$ be any two basic intervals from $C_{k-1}$ and $D_{k-1}$, respectively, i.e. $t=|I|=|J|=c_1c_2\cdots c_{k-1}$.   If $k=1$, we let $$I=J=[0,1].$$ By Lemmas \ref{key1} and  \ref{key2}, if we can   show $$f(I, J)=f(\widetilde{I}, \widetilde{J}),$$ then we finish the proof of  Theorem \ref{Main1} for the case $\partial_x f(x,y)>0, \partial_y f(x,y)>0$. 
 
 By the definitions of $\widetilde{I}$ and $\widetilde{J}$, we let
 $$\widetilde{I}=\cup_{i=1}^{n_{k}}I_{i}^{(k)}, \widetilde{J}=\cup_{j=1}^{n_{k}}J_{j}^{(k)}.$$ For simplicity, we assume that the above intervals are arranged from left to right in terms of  the Moran  construction.
To prove $$f(I, J)=f(\widetilde{I}, \widetilde{J}),$$ it suffices to prove
the following:
\begin{itemize}
\item [(1)] for any $1\leq i\leq n_{k}$, $f(I_{i}^{(k)}, \cup_{j=1}^{n_{k}}J_{j}^{(k)})$ is a closed interval;
\item [(2)]
$$f(I_{i}^{(k)}, \cup_{j=1}^{n_{k}}J_{j}^{(k)}) \cup f(I_{i+1}^{(k)}, \cup_{j=1}^{n_{k}}J_{j}^{(k)}),$$ is a closed interval for any $1\leq i\leq n_{k}-1.$
\end{itemize}
\begin{figure}[tbph]
\centering\includegraphics[width=0.4\textwidth]{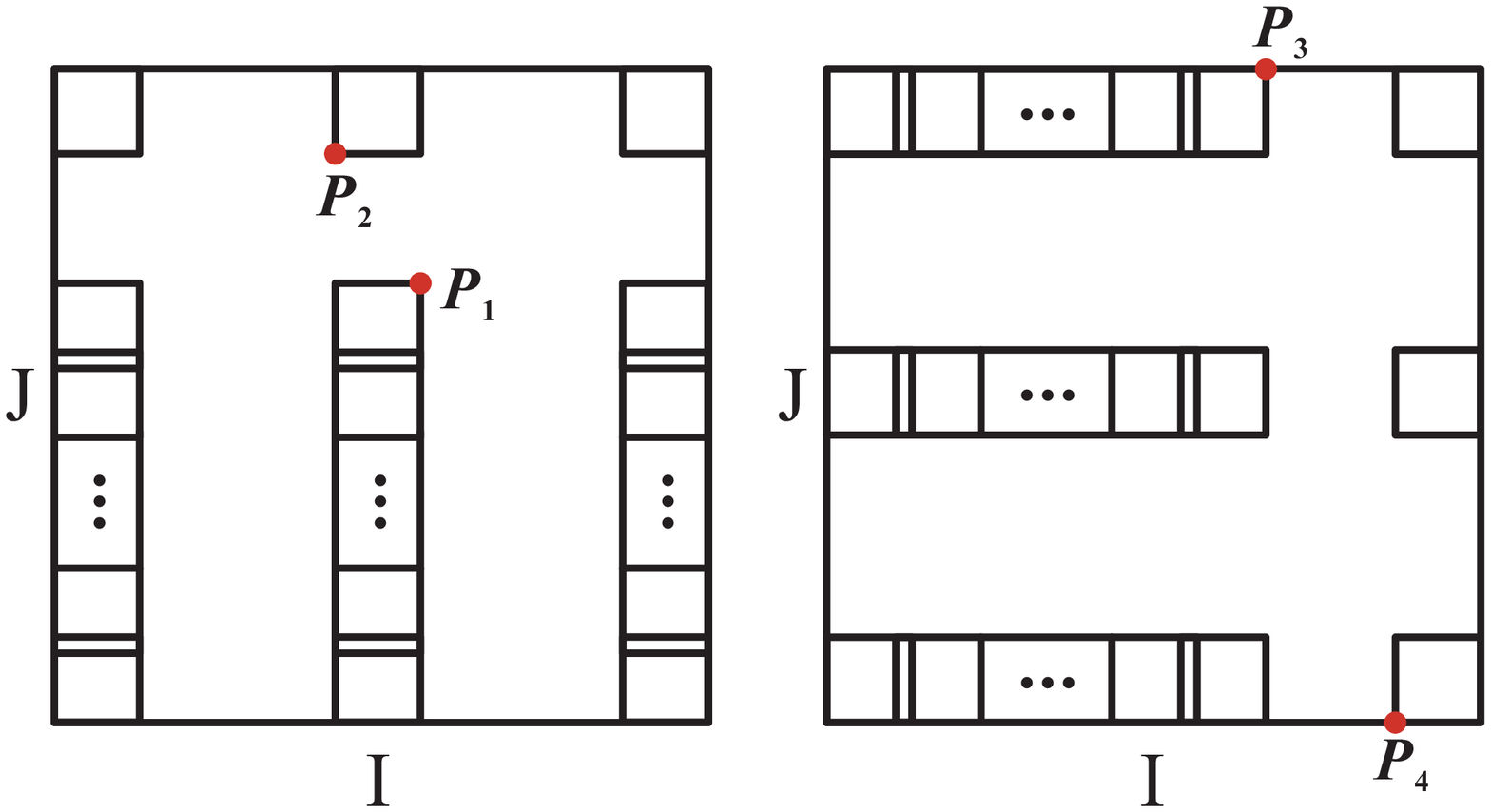} \vspace{0cm}
\centering\includegraphics[width=0.4\textwidth]{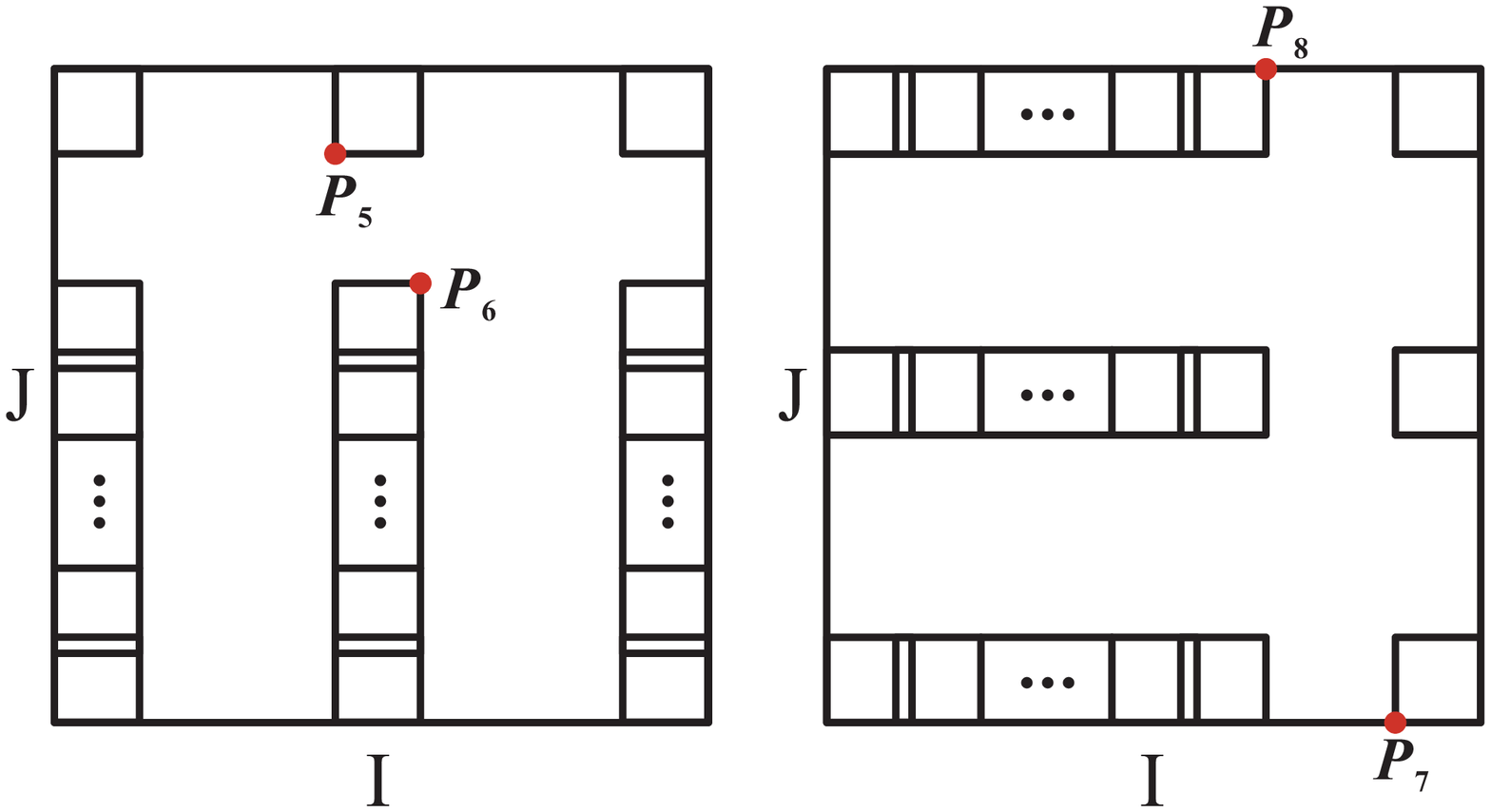} \vspace{0cm}

\caption{$\partial_x f>0, \partial_y f>0$ and $\partial_x f<0, \partial_y f<0$}
\end{figure}
\begin{figure}[tbph]
\centering\includegraphics[width=0.4\textwidth]{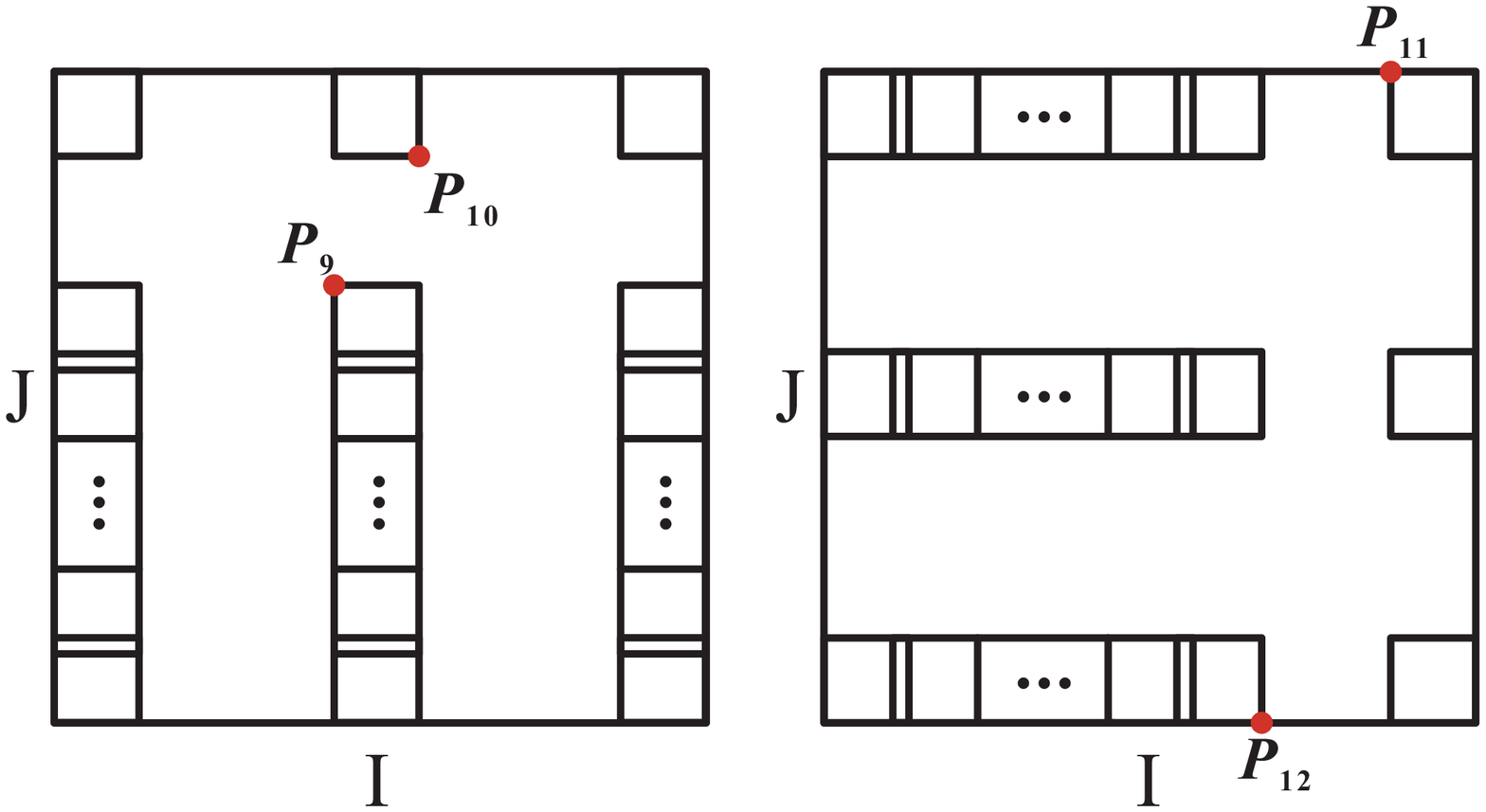} \vspace{0cm}
\centering\includegraphics[width=0.4\textwidth]{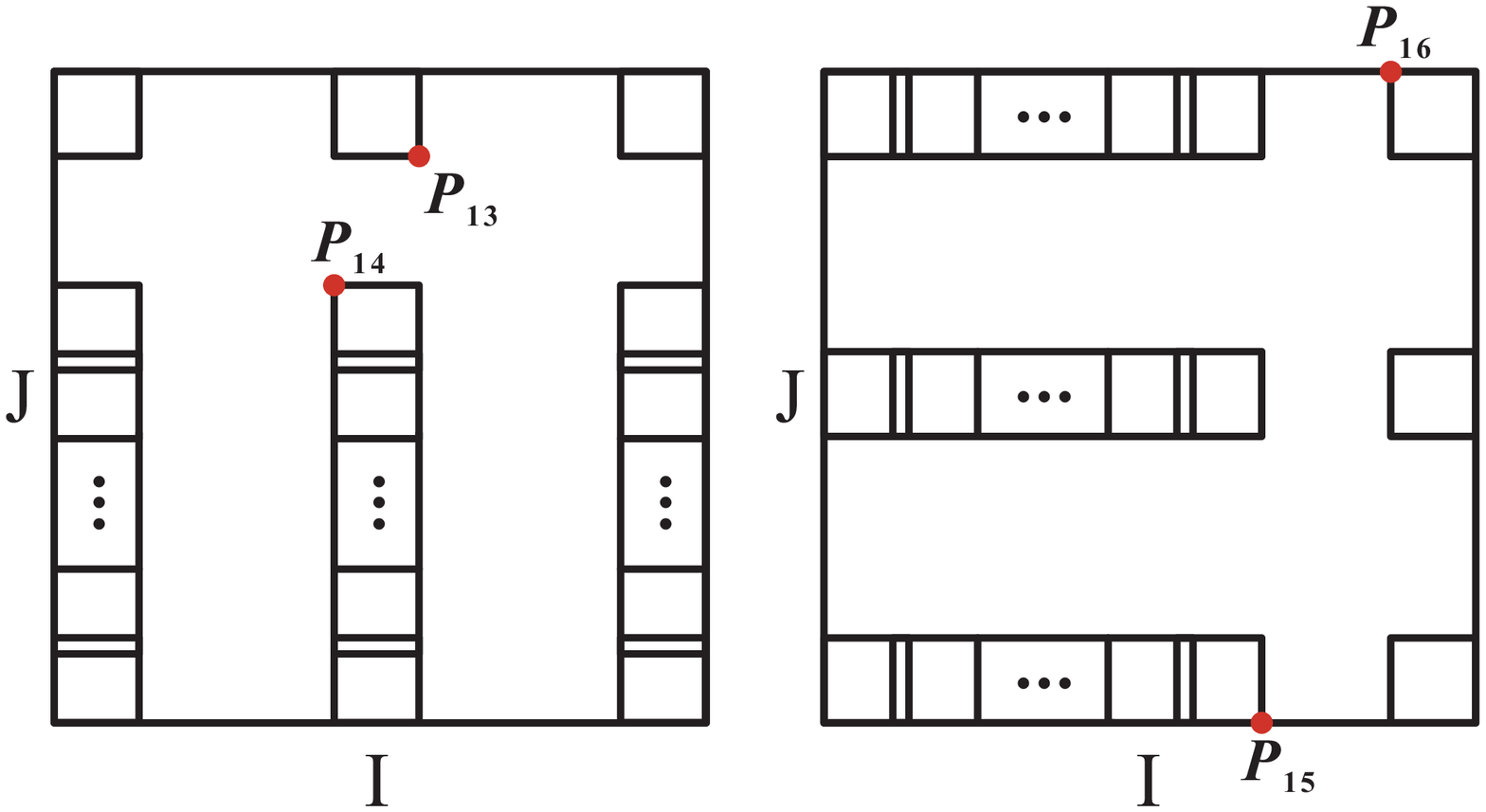} \vspace{0cm}
\caption{$\partial_x f<0, \partial_y f>0$ and $\partial_x f>0, \partial_y f<0$}
\end{figure}

 To prove
 $$f(I_{i}^{(k)}, \cup_{j=1}^{n_{k}}J_{j}^{(k)})$$ is a closed interval, it suffices to prove that
 for any $1\leq j\leq n_k-1 $, $$f(I_{i}^{(k)}, J_{j}^{(k)})\cup f(I_{i}^{(k)}, J_{j+1}^{(k)})$$ is an interval.
 By the assumptions  $$\partial_x f(x,y)>0, \partial_y f(x,y)>0$$ for any $(x,y)\in [0,1]\times [0,1]$,
 we only need to show that the right endpoints of $f(I_{i}^{(k)}, J_{j}^{(k)})$ is not less than the left endpoint of $f(I_{i}^{(k)}, J_{j+1}^{(k)})$ (if  the squares  $I_{i}^{(k)}\times J_{j}^{(k)}$ and $I_{i}^{(k)}\times J_{j+1}^{(k)}$ are intersected, then we do not need to prove this as clearly in this case $f(I_{i}^{(k)}, J_{j}^{(k)})\cup f(I_{i}^{(k)}, J_{j+1}^{(k)})$ is an interval).
 By the definition of Moran set, the basic intervals $ J_{j}^{(k)}$ can be ``random". That means its location can be changed  if the four Moran conditions are satisfied. Therefore, to prove
  $$f(I_{i}^{(k)}, J_{j}^{(k)})\cup f(I_{i}^{(k)}, J_{j+1}^{(k)})$$ is an interval, it suffices to prove one extreme case, which is shown in the first graph of Figure 1.  In other words, it remains to prove $f(P_1)\geq f(P_2)$.
In  the first graph of Figure 1,  the extreme case is shown in  the middle column (which is associated with the $i$-th column), we let the first $n_k-1$ intervals intersecting with each other.  The length of  these intersections is $\kappa c_1c_2\cdots c_k$, i.e. $$\dfrac{|J_{j}^{(k)}\cap J_{j+1}^{(k)}|}{|J_{j}^{(k)}|}=\kappa, 1\leq j\leq n_k-2.$$ Therefore, the length of
  $\cup_{j=1}^{n_k-1}J_{j}^{(k)}$ is $$(1+(n_k-2)(1-\kappa))c_1c_2\cdots c_k.$$
  Recall that $I=[a, a+t], J=[b, b+t]$.  We let
 \begin{eqnarray*}
P_1&=& (a+x_0, b+(1+(n_k-2)(1-\kappa))c_1c_2\cdots c_k)\\
P_2&=& (a+x_0-c_1c_2\cdots c_k, b+c_1c_2\cdots c_{k-1}-c_1c_2\cdots c_k)\\
l_1&=&c_1c_2\cdots c_k\\
l_2&=&c_1\cdots c_{k-1}(c_k(1+(n_k-2)(1-\kappa))-(1-c_k))).
\end{eqnarray*}
By  Taylor's theorem, we have
 \begin{eqnarray*}
f(P_1)-f( P_2)&=&l_1 \partial_x f(x,y) + l_2 \partial_y f(x,y)+\dfrac{1}{2}(l_1\partial_x+l_2\partial_y)^2f(\eta_1, \eta_2),
\end{eqnarray*}
 where  $(\eta_1, \eta_2)\in [0,1]\times [0,1].$
 Therefore, it suffices to prove
 $$l_1\partial_x f(x,y) + l_2 \partial_y f(x,y)\geq 0$$ and
 $$(l_1\partial_x+l_2\partial_y)^2f(x, y)\geq 0$$ for any $(x,y)\in [0,1]\times [0,1]$.
 However, these two inequalities are exactly the conditions given in theorem.

Similarly, we need to prove $(2)$. For this case, it suffices to prove the extreme case which is shown in the second graph of Figure 1.  More precisely, we need to show $f(P_3)\geq f(P_4),$ where
\begin{eqnarray*}
P_3&=& (a+(1+(n_k-2)(1-\kappa))c_1c_2\cdots c_k, b+c_1c_2\cdots c_{k-1})\\
P_4&=& (a+c_1c_2\cdots c_{k-1}-c_1c_2\cdots c_k, b)\\
l_3&=&c_1c_2\cdots c_{k-1}((2+(n_k-2)(1-\kappa)) c_k-1)\\
l_4&=&c_1\cdots c_{k-1}.
\end{eqnarray*}
In terms of Taylor's theorem again, we have
  \begin{eqnarray*}
f(P_3)-f( P_4)&=&l_3 \partial_x f(x,y) + l_4 \partial_y f(x,y)+\dfrac{1}{2}(l_3\partial_x+l_4\partial_y)^2f(\eta_3, \eta_4),
\end{eqnarray*}
where $(\eta_3, \eta_4)\in [0,1]^2.$
By the conditions in the theorem, we have $$l_3 \partial_x f(x,y) + l_4 \partial_y f(x,y)\geq 0, (l_3\partial_x+l_4\partial_y)^2f(x,y)\geq 0$$ for any $(x,y)\in [0,1]\times [0,1].$

For the other three cases, the proof is similar. We only need to prove the extreme cases which are shown in Figures $1$ and $2$.  For instance, when 
$$\partial_x f(x,y)<0, \partial_y f(x,y)<0, \mbox{ (see the third and fourth graphs in Figure 1)}$$
we need to prove 
$$f(P_5)\geq f(P_6),f(P_7)\geq f(P_8),$$ where 
 \begin{eqnarray*}
P_5&=& (a+x_0, b+c_1c_2\cdots c_{k-1}-c_1c_2\cdots c_{k-1}c_k)\\
P_6&=& (a+x_0+c_1c_2\cdots c_k, b+(1+(n_k-2)(1-\kappa))c_1c_2\cdots c_k)\\
l_5&=&-c_1c_2\cdots c_k\\
l_6&=&-c_1\cdots c_{k-1}(c_k(2+(n_k-2)(1-\kappa))-1)\\
P_7&=&(a+c_1c_2\cdots c_{k-1}-c_1c_2\cdots c_k,b)\\
P_8&=&(a+(1+(n_k-2)(1-\kappa))c_1\cdots c_k, b+c_1c_2\cdots c_{k-1})\\
l_7&=&c_1c_2\cdots c_{k-1}(1-c_k(2+(n_k-2)(1-\kappa)))\\
l_8&=&-c_1c_2\cdots c_{k-1}.
\end{eqnarray*}
Applying Taylor's theorem, we have 
  \begin{eqnarray*}
f(P_5)-f( P_6)&=&l_5 \partial_x f(x,y) + l_6 \partial_y f(x,y)+\dfrac{1}{2}(l_5\partial_x+l_6\partial_y)^2f(\eta_5, \eta_6)\\
f(P_7)-f( P_8)&=&l_7 \partial_x f(x,y) + l_8 \partial_y f(x,y)+\dfrac{1}{2}(l_7\partial_x+l_8\partial_y)^2f(\eta_7, \eta_8),
\end{eqnarray*}
where $(\eta_5, \eta_6), (\eta_7, \eta_8)\in [0,1]\times [0,1]$.
In terms of the associated conditions in Theorem \ref{Main1}, we have that for any $(x,y)\in[0,1]^2$,
  \begin{eqnarray*}
l_5 \partial_x f(x,y) + l_6 \partial_y f(x,y)&\geq &0\\
(l_5\partial_x+l_6\partial_y)^2f(x,y)&\geq &0\\
l_7 \partial_x f(x,y) + l_8 \partial_y f(x,y)&\geq &0\\
(l_7\partial_x+l_8\partial_y)^2f(x,y)&\geq &0.
\end{eqnarray*}
If $$\partial_x f(x,y)<0, \partial_y f(x,y)>0,$$ then by the first and second graphs in Figure 2, we only need to prove 
$$f(P_9)\geq f(P_{10}),f(P_{11})\geq f(P_{12}),$$ where 
 \begin{eqnarray*}
P_9&=&(a+x_0,b+(1+(n_k-2)(1-\kappa))c_1\cdots c_k)\\
P_{10}&=&(a+x_0+c_1c_2\cdots c_k,b+c_1c_2\cdots c_{k-1}-c_1c_2\cdots c_k)\\
l_9&=&-c_1c_2\cdots c_k\\
l_{10}&=&c_1c_2\cdots c_{k-1}(c_k(2+(n_k-2)(1-\kappa))-1)\\
P_{11}&=&(a+c_1c_2\cdots c_{k-1}-c_1c_2\cdots c_k, b+c_1c_2\cdots c_{k-1})\\
P_{12}&=&(a+(1+(n_k-2)(1-\kappa))c_1\cdots c_k,b)\\
l_{11}&=&-c_1c_2\cdots c_{k-1}(c_k(2+(n_k-2)(1-\kappa))-1)\\
l_{12}&=&c_1c_2\cdots c_{k-1}.
\end{eqnarray*}
By  Taylor's theorem and the conditions in theorem, we have 
 \begin{eqnarray*}
f(P_{9})-f( P_{10})&=&l_{9} \partial_x f(x,y) + l_{10} \partial_y f(x,y)+\dfrac{1}{2}(l_{9}\partial_x+l_{10}\partial_y)^2f(\eta_{9}, \eta_{10})\geq 0,\\
f(P_{11})-f( P_{12})&=&l_{11} \partial_x f(x,y) + l_{12} \partial_y f(x,y)+\dfrac{1}{2}(l_{11}\partial_x+l_{12}\partial_y)^2f(\eta_{11}, \eta_{12})\geq 0,
\end{eqnarray*}
where $(\eta_{9}, \eta_{10}), (\eta_{11}, \eta_{12})\in [0,1]^2.
$ 

For the final case, i.e. $$\partial_x f(x,y)>0, \partial_y f(x,y)<0$$ for any $(x,y)\in [0,1]^2$,  by the third and fourth graphs in Figure 2, we need to show
$$f(P_{13})\geq f(P_{14}),f(P_{15})\geq f(P_{16}),$$ where 
 \begin{eqnarray*}
P_{13}&=&(a+x_0+c_1c_2\cdots c_k, b+c_1c_2\cdots c_{k-1}-c_1c_2\cdots c_k)\\
P_{14}&=&a+x_0, b+(1+(n_k-2)(1-\kappa))c_1\cdots c_k\\
l_{13}&=&c_1c_2\cdots c_k\\
l_{14}&=&c_1c_2\cdots c_{k-1}(1-c_k(2+(n_k-2)(1-\kappa)))\\
P_{15}&=&(a+(1+(n_k-2)(1-\kappa))c_1\cdots c_k, b)\\
P_{16}&=&(a+c_1c_2\cdots c_{k-1}-c_1c_2\cdots c_k, b+c_1c_2\cdots c_{k-1})\\
l_{15}&=&c_1c_2\cdots c_{k-1}(c_k(2+(n_k-2)(1-\kappa))-1)\\
l_{16}&=&-c_1c_2\cdots c_{k-1}.
\end{eqnarray*}
  \begin{eqnarray*}
f(P_{13})-f( P_{14})&=&l_{13} \partial_x f(x,y) + l_{14} \partial_y f(x,y)+\dfrac{1}{2}(l_{13}\partial_x+l_{14}\partial_y)^2f(\eta_{13}, \eta_{14}),\\
f(P_{15})-f( P_{16})&=&l_{15} \partial_x f(x,y) + l_{16} \partial_y f(x,y)+\dfrac{1}{2}(l_{15}\partial_x+l_{16}\partial_y)^2f(\eta_{15}, \eta_{16}),
\end{eqnarray*}
where $(\eta_{13}, \eta_{14}), (\eta_{15}, \eta_{16})\in [0,1]^2.
$ 
By the conditions in theorem, we clearly have 
$$f(P_{13})-f( P_{14})\geq 0, f(P_{15})-f( P_{16})\geq 0.$$
\end{proof}
\section{Proof of Theorem \ref{SSS1}}
\begin{proof}[\bf{Proof of Theorem \ref{SSS1}}]
The proof is similar to Theorem \ref{Main1}. We only prove the case $$\partial_x f(x,y)>0, \partial_y f(x,y)>0, \mbox{ for any }(x,y)\in [0,1]^2.$$ 
Let $$I=f_{i_1\cdots i_{k-1}}([0,1])\subset C_{k-1}, J=g_{j_1\cdots j_{k-1}}([0,1])\subset D_{k-1}$$ be two basic intervals. By definition, we have 
$$\tilde{I}=\cup_{i=1}^{n}f_{i_1\cdots i_{k-1}i}([0,1]), \tilde{J}=\cup_{j=1}^{m}g_{j_1\cdots j_{k-1}j}([0,1]).$$  Denote $I_i=f_{i_1\cdots i_{k-1}i}([0,1]), J_j=g_{j_1\cdots j_{k-1}j}([0,1]), 1\leq i\leq n, 1\leq j\leq m.$

By Lemmas \ref{key1} and \ref{key2}, if we  prove  $$f(\tilde{I}, \tilde{J})=\cup_{i=1}^{n}\cup_{j=1}^{m}f(I_i, J_j)=f(I,J),$$ then we finish the proof of Theorem \ref{SSS1}. However,  for this statement it is enough to show the following:
\begin{itemize}
\item[(1)] for any $1\leq i\leq n$, we have $$f(I_i, \tilde{J})=f(I_i,\cup_{j=1}^{m}J_j )$$ is a closed interval; 
\item[(2)]$$f(I_i, \cup_{j=1}^{m}J_j )=f(I_{i+1},\cup_{j=1}^{m}J_j )$$ is a closed interval for any $1\leq i\leq n-1$. 
\end{itemize}
For  $(1)$, it only remains to prove that $$f(I_i, J_j )\cup f(I_i, J_{j+1})$$ is a closed interval.  Since $\partial_x f(x,y)>0, \partial_y f(x,y)>0$ for any $(x,y)\in [0,1]^2,$ we only need to prove the right endpoint of $f(I_i, J_j)$ is not less than the left endpoint of $f(I_i, J_{j+1})$.
Let $$P_{17}=(f_{i_1\cdots i_{k-1}i}(1), g_{j_1\cdots j_{k-1}}(0)+\lambda^{k-1}g_j(1))$$ be the right endpoint of $f(I_i, J_j)$, and 
$$P_{18}=(f_{i_1\cdots i_{k-1}i}(1)-\lambda^{k}, g_{j_1\cdots j_{k-1}}(0)+\lambda^{k-1}g_{j+1}(0))$$ be the left endpoint of $f(I_i, J_{j+1})$. 
 By  Taylor's theorem, we have 
 \begin{eqnarray*} 
f(P_{17})-f( P_{18})&=&l_{17} \partial_x f(x,y) + l_{18} \partial_y f(x,y)+\dfrac{1}{2}(l_{17}\partial_x+l_{18}\partial_y)^2f(\eta_{17}, \eta_{18}),
\end{eqnarray*}
where $l_{17}=\lambda^k, l_{18}=\lambda^{k-1}(g_j(1)-g_{j+1}(0)), $
and $(\eta_{17}, \eta_{18})\in [0,1]^2.$
By the assumptions $$\partial_x f(x,y)>0, \partial_y f(x,y)>0$$ for any $(x,y)\in [0,1]^2,$ if $ g_j(1)-g_{j+1}(0)\geq 0$, then we have 
$$l_{17} \partial_x f(x,y) + l_{18} \partial_y f(x,y)\geq 0.$$ If $j\in F_2,$ then the condition $$\dfrac{\partial_y f(x,y)}{\partial_x f(x,y)}\leq\dfrac{\lambda}{\tau_2}
$$ implies that 
$$\dfrac{\partial_y f(x,y)}{\partial_x f(x,y)}\leq\dfrac{\lambda}{\tau_2}\leq \dfrac{\lambda}{g_{j+1}(0)-g_j(1)}. 
$$ 
Therefore, under the conditions in the theorem, we always have 
$$l_{17} \partial_x f(x,y) + l_{18} \partial_y f(x,y)\geq 0.$$
It is easy to see that $$(l_{17}\partial_x+l_{18}\partial_y)^2f(x,y)\geq 0$$ if and only if 
$$(\lambda\partial_x+(g_j(1)-g_{j+1}(0))\partial_y)^2f(x,y)\geq 0.$$

Now, we prove $(2)$. It suffices to prove  the right endpoint of $f(I_i, \cup_{j=1}^{m}J_j)$ is not less than the left endpoint of $f(I_{i+1}, \cup_{j=1}^{m}J_j)$, i.e. 
$$f(P_{19})\geq f(P_{20}),$$
where $$P_{19}=(f_{i_1\cdots i_{k-1}}(0)+\lambda^{k-1}f_i(1), g_{j_1\cdots j_{k-1}}(0)+\lambda^{k-1})$$ is the right endpoint of $f(I_i, \cup_{j=1}^{m}J_j)$, and 
$$P_{20}=(f_{i_1\cdots i_{k-1}}(0)+\lambda^{k-1}f_{i+1}(0), g_{j_1\cdots j_{k-1}}(0))$$
is the left endpoint of $f(I_{i+1}, \cup_{j=1}^{m}J_j).$
By  Taylor's theorem again, it follows that 
\begin{eqnarray*}
f(P_{19})-f( P_{20})&=&l_{19} \partial_x f(x,y) + l_{20} \partial_y f(x,y)+\dfrac{1}{2}(l_{19}\partial_x+l_{20}\partial_y)^2f(\eta_{19}, \eta_{20}),
\end{eqnarray*}
where $l_{19}=\lambda^{k-1}(f_i(1)-f_{i+1}(0)), l_{20}=\lambda^{k-1}, $
and $(\eta_{19}, \eta_{20})\in [0,1]^2.$
With a similar discussion as the first case, by the conditions
\begin{equation*}
   \left\lbrace\begin{array}{cc}
              ( (f_i(1)-f_{i+1}(0))\partial_{x}+\partial_{y})^2 f(x,y)\geq 0  \mbox{ for }1\leq i\leq  n-1\\
                \tau_1\leq\dfrac{\partial_y f(x,y)}{\partial_x f(x,y)}                                \end{array}\right.
\end{equation*}
we always have 
$$l_{19} \partial_x f(x,y) + l_{20} \partial_y f(x,y)\geq 0
$$
and $$(l_{19}\partial_x+l_{20}\partial_y)^2f(x,y)\geq 0$$ for any $(x,y)\in [0,1]^2.$
Hence, we have finished the proof of Theorem \ref{SSS1} for the first case. 
For other  three cases, the proof is similar. 
Suppose that $$\partial_x f(x,y)<0, \partial_y f(x,y)<0, \mbox{ for any }(x,y)\in [0,1]^2.$$ 
Then to prove   $(1)$ and $(2)$, we only need to prove 
$$f(P_{21})\geq f(P_{22}),f(P_{23})\geq f(P_{24}),$$
where 
 \begin{eqnarray*}
P_{21}&=&(f_{i_1\cdots i_{k-1}i}(0), g_{j_1\cdots j_{k-1}j+1}(0))\\
P_{22}&=&(f_{i_1\cdots i_{k-1}i}(0)+\lambda^k, g_{j_1\cdots j_{k-1}j}(1))\\
l_{21}&=&-\lambda^k\\
l_{22}&=&\lambda^{k-1}(g_{j+1}(0)-g_j(1))\\
P_{23}&=&(f_{i_1\cdots i_{k-1}i+1}(0), g_{j_1\cdots j_{k-1}}(0))\\
P_{24}&=&(f_{i_1\cdots i_{k-1}i}(1), g_{j_1\cdots j_{k-1}}(0)+\lambda^{k-1})\\
l_{23}&=&\lambda^{k-1}(f_{i+1}(0)-f_i(1))\\
l_{24}&=&-\lambda^{k-1}.
\end{eqnarray*}
In terms of the conditions in Theorem \ref{SSS1}, we have 
$$f(P_{21})\geq f(P_{22}),f(P_{23})\geq f(P_{24}).$$
Similarly, if 
$$\partial_x f(x,y)<0, \partial_y f(x,y)>0, \mbox{ for any }(x,y)\in [0,1]^2, $$
then to prove   $(1)$ and $(2)$, we only need to prove 
$$f(P_{25})\geq f(P_{26}),f(P_{27})\geq f(P_{28}),$$
where 
 \begin{eqnarray*}
P_{25}&=&(f_{i_1\cdots i_{k-1}i}(0), g_{j_1\cdots j_{k-1}j}(1))\\
P_{26}&=&(f_{i_1\cdots i_{k-1}i}(0)+\lambda^k, g_{j_1\cdots j_{k-1}j+1}(0))\\
l_{25}&=&-\lambda^{k}\\
l_{26}&=&\lambda^{k-1}(g_{j}(1)-g_{j+1}(0))\\
P_{27}&=&(f_{i_1\cdots i_{k-1}i+1}(0), g_{j_1\cdots j_{k-1}}(0)+\lambda^{k-1})\\
P_{28}&=&(f_{i_1\cdots i_{k-1}i}(1), g_{j_1\cdots j_{k-1}}(0))\\
l_{27}&=&\lambda^{k-1}(f_{i+1}(0)-f_i(1))\\
l_{28}&=&\lambda^{k-1}.
\end{eqnarray*}
By virtue of the conditions in Theorem \ref{SSS1}, we have 
$$f(P_{25})\geq f(P_{26}),f(P_{27})\geq f(P_{28}).$$
Finally, we prove the last case, i.e. 
$$\partial_x f(x,y)>0, \partial_y f(x,y)<0, \mbox{ for any }(x,y)\in [0,1]^2.$$
By means of the conditions in Theorem \ref{SSS1}, we have 
$$f(P_{29})\geq f(P_{30}),f(P_{31})\geq f(P_{32}),$$
where 
 \begin{eqnarray*}
P_{29}&=&(f_{i_1\cdots i_{k-1}i}(1), g_{j_1\cdots j_{k-1}j+1}(0))\\
P_{30}&=&(f_{i_1\cdots i_{k-1}i}(1)-\lambda^k, g_{j_1\cdots j_{k-1}j}(1))\\
l_{29}&=&\lambda^{k}\\
l_{30}&=&\lambda^{k-1}(g_{j+1}(0)-g_{j}(1))\\
P_{31}&=&(f_{i_1\cdots i_{k-1}i}(1), g_{j_1\cdots j_{k-1}}(0))\\
P_{32}&=&(f_{i_1\cdots i_{k-1}i+1}(0), g_{j_1\cdots j_{k-1}}(0)+\lambda^{k-1})\\
l_{31}&=&\lambda^{k-1}(f_{i}(1)-f_{i+1}(0))\\
l_{32}&=&-\lambda^{k-1}.
\end{eqnarray*}
Therefore, by the four cases discussed above we prove that under the conditions in Theorem \ref{SSS1}, we always have $f(I,J)=f(\widetilde{I}, \widetilde{J})$. Hence, by Lemmas \ref{key1} and \ref{key2}, we prove Theorem \ref{SSS1}.
\end{proof}
\section{Final remarks}
The conditions in Theorem \ref{Main1} can be modified. We may adjust the   second and third conditions on the second order partial derivatives. That is, if 
\begin{equation*}
   \left\lbrace\begin{array}{cc}
                (c_k\partial_{x}+ \delta_{xy}(c_k(2+(n_k-2)(1-\kappa))-1)\partial_{y})^2f(x,y)\leq 0\\
(\partial_{y}+ \delta_{xy}(c_k(2+(n_k-2)(1-\kappa))-1)\partial_{x})^2f(x,y)\leq 0,
                \end{array}\right.
\end{equation*}
then  we may give some stronger conditions on the first order partial derivatives. 
For some functions which are not differentiable on $\mathbb{R}^2$, for instance 
$$f(x,y)=\dfrac{1}{(x-0.5)^2}+\alpha x+\beta y+\gamma xy,$$ where $\alpha, \beta, \gamma$ are parameters,  we may still use the Corollaries \ref{Cantor1} and \ref{Cantor2} for the domain $$(x,y)\in ([0,1/3]\cup [2/3,1])\times ([0,1/3]\cup [2/3,1]).$$
Furthermore, we may  consider functions with multiple variables. 
We leave these considerations to the reader.  
  \section*{Acknowledgements}
  This work is
supported by K.C. Wong Magna Fund in Ningbo University.
This work is also supported by National Natural Science Foundation of China with No.  11701302, and by Zhejiang Provincial Natural Science Foundation of China with
No.LY20A010009. The authors are grateful to the referees' suggestions and comments. 

\end{document}